\newtheorem{theorem}{Theorem}
\newtheorem{corollary}[theorem]{Corollary}
\newtheorem{remark}{Remark}
\newtheorem{problem}{Problem}
\newcommand{\A}{\mathcal{A}}
\newcommand{\F}{\mathcal{F}}
\renewcommand{\L}{\mathcal{L}}
\title{An explicit condition for boundedly supermultiplicative subshifts}
\author{Vuong Bui\thanks{LIRMM, Universit\'e de Montpellier, CNRS, 161 Rue Ada, 34095, Montpellier, France} \thanks{UET, Vietnam National University, Hanoi, 144 Xuan Thuy Street, Hanoi, 100000, Vietnam} \and Matthieu Rosenfeld\footnotemark[1]}
\date{}
\begin{document}
\maketitle
\begin{abstract}
    We study some properties of the growth rate of  $\L(\A,\F)$, that is, the language of words over the alphabet $\A$ avoiding the set of forbidden factors $\F$. We first provide a sufficient condition on $\F$ and $\A$ for the growth of $\L(\A,\F)$ to be boundedly supermultiplicative.  That is, there exist constants $C>0$ and $\alpha\ge0$, such that for all $n$, the number of words of length $n$ in $\L(\A,\F)$ is between $\alpha^n$ and $C\alpha^n$. In some settings, our condition provides a way to compute $C$, which implies that $\alpha$, the growth rate of the language, is also computable whenever our condition holds.

    We also apply our technique to the specific setting of power-free words where the argument can be slightly refined to provide better bounds. Finally, we apply a similar idea to $\F$-free circular words and in particular we make progress toward a conjecture of Shur about the number of square-free circular words.
\end{abstract}
\section{Introduction}
We study some properties of the growth rate of languages defined by sets of forbidden factors. 
Given an alphabet $\A$ and a set of factors $\F\subseteq\A^+$, we denote by $\mathcal{L}(\A,\F)$ the set of words over $\A$ that do not contain any occurrences of factors from $\F$. For all $n$, we let $\mathcal{L}_n(\A,\F)$ be the set of words from $\mathcal{L}(\A,\F)$ of length $n$.
The \emph{growth rate} $\alpha(\A,\F)$ of $\mathcal{L}(\A,\F)$ is given by
\begin{equation*}
    \alpha(\A,\F) = \lim_{n\rightarrow\infty} |\mathcal{L}_n(\A,\F)|^{1/n}\,.
\end{equation*}
The so-called \emph{topological entropy} of the corresponding subshift is given by $\log \alpha(\A,\F)$ (see for instance \cite{Rosenfeld2022Apr}). 

A sequence $(s_n)_{n\in\mathbb{N}}$ is \emph{submultiplicative} if $s_{n+m}\le s_ns_m$ for every $n,m\ge 1$.
Fekete's lemma states that for every submultiplicative nonnegative sequence  $(s_n)_{n\in \mathbb{N}}$, we have $\lim\limits_{n\to\infty} \sqrt[n]{s_n}=\inf\limits_n \sqrt[n]{s_n}$.
The sequence $\left(|\mathcal{L}_n(\A,\F)|\right)_{n\in\mathbb{N}}$ is submultiplicative, so for every $n$,
\begin{equation}\label{submultiplicative}
    |\mathcal{L}_n(\A,\F)|\ge\alpha(\A,\F)^n\,.
\end{equation}

We also later use the version for a supermultiplicative positive sequence $(s_n)_{n\in\mathbb{N}}$, which states that $\lim\limits_{n\to\infty} \sqrt[n]{s_n}=\sup\limits_n \sqrt[n]{s_n}$ (a sequence $s_n$ is supermultiplicative if $s_{n+m}\ge s_ns_m$ for every $n,m\ge 1$). The two versions can be seen to be dual forms of each other. Although it is quite obvious by definition that $\left(|\mathcal{L}_n(\A,\F)|\right)_{n\in\mathbb{N}}$ is submultiplicative, some insight is required to prove a weak form of supermultiplicativity of the same sequence.



When the set $\F$ is finite, the growth rate is given by the spectral radius of the matrix associated with the finite automaton that accepts this language.
Using elementary linear algebra, this implies all sorts of nice properties. 
For instance, $\alpha(\A,\F)$ is computable and even algebraic.
It also entails more precise versions of \eqref{submultiplicative}: There are computable constants $c,t\ge0$ such that $|\mathcal{L}_n(\A,\F)|\sim c n^t \alpha(\A,\F)^n$. 

However, when $\F$ is infinite, the situation is obviously more complicated and more interesting. The motivation behind this work stems from the following result.
\begin{theorem}[\cite{miller,Ochem2016Feb,Rosenfeld2022Apr}\footnote{To be precise the proof in \cite{miller} only implies that $\alpha(\A,\F)\ge 1$ under these hypotheses.}]\label{boundOnGrowth}
    Let $\A$ be an alphabet and let $\F\subseteq\A^+$ be a set of factors.
    Suppose there exists $\beta>1$ such that 
    \begin{equation}\label{mainCondition}
        |\A| \ge \beta+\sum_{f\in \mathcal F} \beta^{1-|f|}\,,
    \end{equation}
    then for all $n\ge0$,
    $$|\mathcal{L}_{n+1}(\A,\F)|\ge \beta|\mathcal{L}_{n}(\A,\F)|\,.$$
    In particular, this implies
    $$\alpha(\A,\F)\ge \beta\,.$$
\end{theorem}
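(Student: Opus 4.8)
The plan is to prove the one-step inequality $|\mathcal{L}_{n+1}(\A,\F)|\ge\beta\,|\mathcal{L}_{n}(\A,\F)|$ by strong induction on $n$, and then obtain $\alpha(\A,\F)\ge\beta$ simply by iterating it from $|\mathcal{L}_0(\A,\F)|=1$. Throughout, write $L_n=|\mathcal{L}_n(\A,\F)|$. The heart of the argument is a bound on the number of one-letter right extensions of a valid word of length $n$ that fail to be valid, which will feed a recursive inequality of the form $L_{n+1}\ge |\A|L_n-\sum_{f}(\cdots)$.

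For the counting step, fix $n$ and consider the pairs $(w,a)$ with $w\in\mathcal{L}_n(\A,\F)$ and $a\in\A$. Since $w$ already avoids $\F$, the word $wa$ fails to lie in $\mathcal{L}_{n+1}(\A,\F)$ \emph{iff} $wa$ has a suffix in $\F$. For a fixed $f\in\F$ of length $\ell$, such a pair exists only when $\ell\le n+1$, and then $a$ must be the last letter of $f$ while the last $\ell-1$ letters of $w$ must equal the first $\ell-1$ letters of $f$; the length-$(n-\ell+1)$ prefix of $w$ must itself belong to $\mathcal{L}_{n-\ell+1}(\A,\F)$. Hence at most $L_{n-\ell+1}$ pairs are ``killed'' by this particular $f$. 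Summing over $\F$ (which merely overcounts pairs $(w,a)$ whose extension has several forbidden suffixes) gives
\begin{equation*}
    L_{n+1}\ \ge\ |\A|\,L_n-\sum_{\substack{f\in\F\\ |f|\le n+1}}L_{n-|f|+1}\,.
\end{equation*}

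Now run the strong induction: assume $L_{m+1}\ge\beta L_m$ for all $m<n$, which telescopes to $L_k\le\beta^{\,k-n}L_n$ for every $0\le k\le n$ (trivial when $k=n$). Substituting $k=n-|f|+1$ gives $L_{n-|f|+1}\le\beta^{\,1-|f|}L_n$, and since all terms are nonnegative we may enlarge the sum to all of $\F$, obtaining
\begin{equation*}
    L_{n+1}\ \ge\ \Bigl(|\A|-\sum_{f\in\F}\beta^{\,1-|f|}\Bigr)L_n\ \ge\ \beta L_n
\end{equation*}
by hypothesis \eqref{mainCondition}. The case $n=0$ needs no separate treatment: the induction hypothesis is then vacuous and the bound $L_k\le\beta^{\,k}L_0$ holds trivially for $k=0$ (or one simply checks $L_1=|\A|-|\F\cap\A|\ge\beta$ directly from \eqref{mainCondition}). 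Iterating the resulting inequality yields $L_n\ge\beta^{\,n}$, hence $\alpha(\A,\F)=\lim_n L_n^{1/n}\ge\beta$.

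The only genuinely delicate point is the counting step: one must observe that appending a single letter can only create a forbidden factor that is a \emph{suffix}, and that once $f$ is chosen its length-$(\ell-1)$ prefix is pinned to the tail of $w$, leaving just $n-\ell+1$ free positions, hence at most $L_{n-\ell+1}$ possibilities. Everything else — dropping to the full sum over $\F$, tolerating the overcount — only moves the inequality in the favorable direction, so no further care is required.
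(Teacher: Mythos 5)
Your proposal is correct and follows essentially the same route as the paper's proof: a strong induction giving $|\mathcal{L}_{n-j}(\A,\F)|\le\beta^{-j}|\mathcal{L}_n(\A,\F)|$, combined with counting the failed one-letter extensions by the forbidden suffix $f$ they create, each $f$ contributing at most $|\mathcal{L}_{n+1-|f|}(\A,\F)|$ bad words. Your handling of the edge cases ($|f|>n+1$ and $n=0$) matches the paper's remarks, so there is nothing to fix.
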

The different proofs of this result rely on similar ideas that we also adapt in this article. The first occurrence of this idea seems to be by Golod and Šafarevič in an algebraic context \cite{golod}. Almost forty years later, Bell and Goh discovered that the same idea could be used in combinatorics on words \cite{Bell2007Sep}. Miller's proof seems to be independent of Golod and Šafarevič's article, but relies on a similar idea. A similar idea was also independently rediscovered by Kolpakov \cite{Kolpakov2007, Kolpakov2007Dec}, and improved by Shur \cite{Shur2010Jul,Shur2014Feb} (see \cite{Shur2012Nov}, for a survey on this topic in relation to the growth of power-free languages).

The readers should note that \eqref{mainCondition} as well as other equations in the article involve $|f|$ for $f\in \F$. A small issue is that different sets $\F$ of forbidden factors may give the same language $\L(\A,\F)$. Therefore, most bounds in this article depend on the choice of $\F$. However, it is not hard to see that we can always choose $\F$ to be minimal, in the sense that each factor in $\F$ avoids all the other factors in $\F$. For a given language $\L$, this choice of $\F$ always gives the best bounds among all the choices. However, there are other ways to improve this choice of $\F$. For instance, it is known that the growth of infinitely extendable words from $\L$ is the same as the growth of $\L$ which can be used to obtain even better bounds.

The condition of this theorem is general enough to solve open problems about specific languages (see for instance, \cite{Ochem2016Feb,Rosenfeld2022Apr}). 
Extensions of the technique behind this proof were also used by the second author to study other related problems, in particular in the context of avoidability of powers \cite{Rosenfeld2024May,Rosenfeld2022Sep,Rosenfeld2021Oct}.
However, for any particular language, it is hard to say how good the lower bound is, and it says nothing about the computability of $\alpha(\A,\F)$.
However, by making the inequality \eqref{mainCondition} strict, we can actually prove the following weak form of supermultiplicativity for $(|\L_n(\A,\F)|)_n$, whose corollaries include a lower bound on $\alpha(\A,\F)$.
\begin{theorem}\label{intromult}
    Let $\A$ be an alphabet and let $\F\subseteq\A^+$ be a set of factors.
    Suppose there exists $\beta>1$ such that   
    \begin{equation*}
        |\A| > \beta+\sum_{f\in \F} \beta^{1-|f|}\,,
    \end{equation*}
    then there exists $C>0$ for all $n,m\ge 0$,
    \begin{equation}\label{supermultiplicative}
        |\mathcal{L}_{n+m}(\A,\F)|\ge C\cdot|\mathcal{L}_{n}(\A,\F)|\cdot|\mathcal{L}_{m}(\A,\F)|\,.
    \end{equation}
\end{theorem}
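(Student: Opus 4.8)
The plan is to establish the equivalent inequality $L_{n+m}\ge C\,L_n L_m$ (writing $L_k=|\mathcal L_k(\A,\F)|$) by controlling the number of \emph{bad pairs}. Every word of $\mathcal L_{n+m}(\A,\F)$ corresponds to a pair $(u,v)\in\mathcal L_n(\A,\F)\times\mathcal L_m(\A,\F)$ via $w=uv$, and $uv\in\mathcal L(\A,\F)$ exactly when no forbidden factor straddles the boundary between the two blocks (a forbidden factor lying entirely in $u$ or in $v$ is already excluded). Hence $L_{n+m}=L_nL_m-B_{n,m}$, where $B_{n,m}$ counts the pairs $(u,v)$ such that $uv$ contains a forbidden factor crossing position $n$, and it suffices to prove $B_{n,m}\le(1-C)L_nL_m$ for some constant $C>0$ depending only on $\A$, $\F$ and $\beta$.

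To bound $B_{n,m}$, I would set up a Golod--Šafarevič-type estimate anchored at the boundary. Scanning a bad pair $uv$ from the left, there is a well-defined first position $n+j$ (with $j\ge1$) at which a prefix of $uv$ fails to avoid $\F$; since $u,v\in\mathcal L(\A,\F)$, the witnessing factor $f$ ends inside $v$ but must begin inside $u$, so $|f|>j$ and $u$ ends with the length-$(|f|-j)$ prefix of $f$. Grouping bad pairs by the pair $(j,f)$ and discarding the (restrictive) conditions making this decomposition exact gives an upper bound of the shape
\begin{equation*}
  B_{n,m}\ \le\ \sum_{j\ge1}\ \sum_{\substack{f\in\F\\ |f|>j}}\ \bigl(\#\{u\in\mathcal L_n(\A,\F): u\text{ ends with }\mathrm{pref}_{|f|-j}(f)\}\bigr)\cdot L_{m-j}\,.
\end{equation*}
Theorem~\ref{boundOnGrowth} furnishes the geometric bounds $L_{k-i}\le\beta^{-i}L_k$, so the first factor is at most $\beta^{-(|f|-j)}L_n$ and the completion count is at most $\beta^{-j}L_m$; thus each $(j,f)$-term is at most $\beta^{-|f|}L_nL_m$, and summing the admissible ranges of $j$ crudely yields $B_{n,m}\le L_nL_m\sum_{f\in\F}(|f|-1)\beta^{-|f|}$.

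The crux — and the point where the \emph{strict} inequality is genuinely needed, not just the weak one of Theorem~\ref{boundOnGrowth} — is that $\sum_{f\in\F}(|f|-1)\beta^{-|f|}$ need not be $<1$, so this crude bound is insufficient and one must avoid the over-counting. Keeping the ``first obstruction'' restriction forces the prefix of $uv$ just before the witnessing occurrence to avoid $\F$, which means $u$ together with the $v$-side of $f$ (minus its last letter) still lies in $\mathcal L(\A,\F)$; this ties the near-completed forbidden words together enough to re-run a Golod--Šafarevič count, and the slack $\varepsilon:=|\A|-\beta-\sum_{f\in\F}\beta^{1-|f|}>0$ provides exactly the room to absorb the boundary contributions. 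I expect this re-summation to be the main technical obstacle: one must control how many forbidden words a word $u$ can ``almost complete,'' and this cannot be done pointwise in $u$ (a dead-end word $u$ has no valid extension at all, so the obvious per-$u$ estimate fails), so the bound must be carried out after summing over $u\in\mathcal L_n(\A,\F)$. Once $B_{n,m}\le(1-C)L_nL_m$ is established with $C=C(\A,\F,\beta)>0$, the inequality~\eqref{supermultiplicative} follows at once, and combined with~\eqref{submultiplicative} and Fekete's lemma it upgrades to the two-sided estimate $\alpha(\A,\F)^n\le L_n\le C^{-1}\alpha(\A,\F)^n$ claimed in the abstract.
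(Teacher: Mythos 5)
Your set-up and your ``crude'' bound are exactly the ones used in the paper's Theorem \ref{thmSupermult}: write $\L_{n+m}(\A,\F)$ as $\L_n(\A,\F)\cdot\L_m(\A,\F)$ minus the pairs where a forbidden factor straddles the cut, index the bad pairs by the crossing offset and the factor $f$, and use $|\L_{k-i}(\A,\F)|\le\beta^{-i}|\L_k(\A,\F)|$ (from Theorem \ref{boundOnGrowth}) on both sides of the occurrence to get $B_{n,m}\le |\L_n(\A,\F)|\,|\L_m(\A,\F)|\sum_{f\in\F}(|f|-1)\beta^{-|f|}$. The gap is in what you do next. You correctly observe that $\sum_{f\in\F}(|f|-1)\beta^{-|f|}$ need not be below $1$ for the given $\beta$, but your proposed fix --- keeping this $\beta$ and sharpening the count of bad pairs via the ``first obstruction'' structure and the slack $\varepsilon=|\A|-\beta-\sum_{f\in\F}\beta^{1-|f|}$ --- is never carried out: you yourself flag the re-summation as the main technical obstacle, note that the obvious per-$u$ estimate fails, and give no argument that the over-counting can actually be absorbed. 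Since the entire content of the theorem in the regime $\sum_{f\in\F}(|f|-1)\beta^{-|f|}\ge 1$ is precisely this missing step, the proposal as written does not prove the statement.

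The paper's resolution is different and much simpler: it does not refine the counting at all, it changes $\beta$. With $\omega(x)=x+\sum_{f\in\F}x^{1-|f|}$ as in Subsection \ref{Analyticsubsec}, the strict hypothesis says $\omega(\beta)<|\A|$; since $\omega$ is convex on $[\beta,\infty)$ and $\omega(x)\ge x\to\infty$, the set $\{x\ge\beta:\omega(x)\le|\A|\}$ is an interval $[\beta,\beta^*]$ with $\beta^*>\beta$, and convexity gives $\omega'(\beta^*)\ge\bigl(\omega(\beta^*)-\omega(\beta)\bigr)/(\beta^*-\beta)>0$. Applying Theorem \ref{boundOnGrowth} at $\beta^*$ yields $|\L_{n+1}(\A,\F)|\ge\beta^*|\L_n(\A,\F)|$, and then the very bound you already derived, run with $\beta^*$ in place of $\beta$, suffices: it is Theorem \ref{thmSupermult} with $C=\omega'(\beta^*)=1-\sum_{f\in\F}(|f|-1)(\beta^*)^{-|f|}>0$ (this is Corollary \ref{cor-analytic}). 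So the strict inequality is exploited not to absorb boundary contributions at the original $\beta$, but to guarantee a larger admissible $\beta$ at which the crude bound is already positive; replacing your unexecuted refined count by this convexity step closes the gap.
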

Theorem \ref{intromult} is in fact a corollary of Theorem \ref{thmSupermult}, which also provides an explicit expression $C=1-\sum_{f\in \F} (|f|-1)\beta^{-|f|}$ (the deduction is given in  Corollary \ref{cor-analytic}).
Subshifts with property \eqref{supermultiplicative} are called \emph{boundedly supermultiplicative shifts} in \cite{Baker2015Oct} and this property also plays a central role in \cite{Pavlov2022Apr}. 
The condition of our approach is more explicit in terms of constants than the condition given by Pavlov \cite[Theorem 4.3]{Pavlov2022Apr}.
The difference in techniques allows us to conclude for all $n,m$ instead of only sufficiently large $n,m$ as in Pavlov's approach. 
We discuss this comparison in more detail in Section \ref{secCompare}.

Inequalities \eqref{submultiplicative} and \eqref{supermultiplicative} imply that 
\begin{equation}\label{eq:constant-factor}
|\mathcal{L}_n(\A,\F)|^{1/n}\cdot C^{1/n}\le\alpha(\A,\F)\le |\mathcal{L}_n(\A,\F)|^{1/n}\,.
\end{equation}
Under the assumption that $\mathcal{L}_n(\A,\F)$ and $C$ can be computed (or that some positive lower bound on $C$ can be computed), the growth rate $\alpha(\A,\F)$ can also be computed.

With more information about the set $\F$, one can improve the conclusions of Theorem \ref{boundOnGrowth}. In particular, in the case of $p$-powers or patterns, we have a stronger version of this theorem \cite{Ochem2016Feb}. Similarly, we can also obtain a stronger version of our result in the setting of avoidability of $p$-powers. In particular, it allows us to prove \eqref{eq:constant-factor} for the language of square-free words over an alphabet of size $5$ for some constant $C$. The particular attention given to $p$-powers is motivated by the fact that this subject received a lot of attention in combinatorics on words since the seminal work of Thue \cite{Thue1}.

We use a similar technique once more to give a partial resolution to a conjecture by Shur about the growth of the number of circular-square-free words \cite{Shur2010Oct}. Circular words can be seen as cyclically ordered finite sequences of letters. That is, we interpret the word as written on a circle, and we read factors along this circle. More formally, we say that two words $u$ and $v$ are conjugate if there are words $s,t$ such that $u=st$ and $v=ts$. The \emph{cyclic word} $\accentset{\circ}{u}$, is the set of conjugates of the word $u$, and we say that $\accentset{\circ}{u}$ avoids a factor $f$ if for all conjugates $v\in \accentset{\circ}{u}$, we have $v$ avoids $f$. 
In \cite{Shur2010Oct}, Shur proved that the number of square-free circular words of length $n$ and minimal squares of length $2n$ (squares that do not contain another square as a proper factor) grows at the same rate \cite[Proposition 1]{Shur2010Oct}.
In the same article, Shur wrote:
\begin{quote}
    From computer experiments, we learn that the growth rate for the set of ternary square-free circular words is approximately $1.3$.
    This growth rate obviously cannot exceed the growth rate for the set of ternary square-free ordinary words.  For the latter one, a nearly exact value is now known \cite{Shur2009}. 
    It is $1.30176\ldots$, so it is quite intriguing whether the two considered growth rates coincide.
\end{quote}
The question of estimating the number of ternary square-free circular words was also explicitly asked in \cite{Currie2020May}. 
 We prove in Theorem \ref{circularsquarefreethm} that Shur's conjecture is true for any alphabet of size at least $5$, but we leave open the cases of the alphabets of size $4$ and $3$. The question of $\F$-free circular words has also been studied in a context more general than squares \cite{Currie2003Jun, Currie2021Jan}, so we also provide a general sufficient condition on $\F$ in Theorem \ref{circularFfreethm} that ensures the existence of $\F$-free circular words.

As already mentioned, all of these proofs rely on similar ideas, some of which can already be found in the proof of Theorem \ref{boundOnGrowth}. We start Section \ref{mainssection} by reproducing the proof of Theorem \ref{boundOnGrowth}. We then state and prove Theorem \ref{thmSupermult}, and we discuss a few consequences and applications. 
In Section \ref{powersection}, we prove a version of this result tailored to the context of avoidability of $p$-powers.
Finally, in Section \ref{Shurconjsection} we provide our partial solution to  Shur's conjecture and our general condition for the existence of $\F$-free circular words.

\section{Supermultiplicativity in the general case}\label{mainssection}
In this section, we first reproduce the proof of Theorem \ref{boundOnGrowth} given in \cite{Rosenfeld2022Apr}. 
Our main motivation for reproducing this proof here is that the other main proofs of this article rely on similar ideas.
We then state and prove Theorem \ref{thmSupermult} which is our sufficient condition for bounded supermultiplicativity.
We then discuss different consequences of this result.

For the rest of the article, given two languages $L_1$ and $L_2$, we denote by $L_1\cdot L_2$ the concatenation of the two languages, that is,  $L_1\cdot L_2= \{uv: u\in L_1,v\in L_2\}$.

\begin{proof}[Proof of Theorem \ref{boundOnGrowth}]
    Let $\A$ be an alphabet, $\F\subseteq\A^+$ be a set of factors and $\beta>1$ such that
    \begin{equation*}
        |\A| \ge \beta+\sum_{f\in \F} \beta^{1-|f|}\,.
    \end{equation*}
    We prove by strong induction on $n$ that for all $n$, $|\mathcal{L}_{n+1}(\A,\F)|\ge \beta|\mathcal{L}_{n}(\A,\F)|$.
    
    Assume that for all $0\le i<n$, $|\mathcal{L}_{i+1}(\A,\F)|\ge \beta|\mathcal{L}_{i}(\A,\F)|$, and we will prove that it is true for $i=n$ as well.
    
   The induction hypothesis implies that for all $j<n$, 
    \begin{equation}\label{FromIHp}
        |\mathcal{L}_{n-j}(\A,\F)|\le \frac{|\mathcal{L}_{n}(\A,\F)|}{\beta^j}\,.
    \end{equation}

    We let $\mathfrak{B}$ be the set of words not in $\L_{n+1}(\A,\F)$ obtained by concatenating a letter to a word from $\L_n(\A,\F)$, that is,
    \begin{align*}
        \mathfrak{B}
        &=\{ua: u\in \mathcal{L}_{n}(\A,\F), a\in \A, ua \text{ does not avoid } \F\}\\
        &=(\mathcal{L}_{n}(\A,\F)\cdot \A) \setminus \mathcal{L}_{n+1}(\A,\F)\,.
    \end{align*}
It follows from $\L_{n+1}(\A,\F)\subseteq \L_n(\A,\F)\cdot \A$ that
    \begin{equation}\label{Binequality}
        |\mathcal{L}_{n+1}(\A,\F)| = |\A|\cdot|\mathcal{L}_{n}(\A,\F)|-|\mathfrak{B}|\,.
    \end{equation}
    For all $f\in \F$, we let $\mathfrak{B}_f$ be the set of words from $\mathfrak{B}$ that ends with $f$ as a suffix. By definition, $\mathfrak{B}=\bigcup_{f\in \F} \mathfrak{B}_f$ which implies 
    \begin{equation}\label{partitioninBf}
        |\mathfrak{B}|\le \sum_{f\in \F} |\mathfrak{B}_f|\,.
    \end{equation}
    
    For all $f\in \F$, and $v\in\mathfrak{B}_f$, there exists $u\in \mathcal{L}_{n+1-|f|}(\A,\F)$ such that $v=uf$. 
    That is, $\mathfrak{B}_f\subseteq\{uf: u\in \mathcal{L}_{n+1-|f|}(\A,\F)\}$. If follows from \eqref{FromIHp}, that for all $f$, 
    \begin{equation*}
        |\mathfrak B_f|\le |\L_{n+1-|f|}(\A,\F)|\le \frac{|\L_{n}(\A,\F)|}{\beta^{|f|-1}}\,.
    \end{equation*}
    Note that although $\mathcal L_{n+1-|f|}$ is undefined when $n+1< |f|$, the final bound still holds (as the set $\mathfrak B_f$ is empty).
    
    Now from \eqref{partitioninBf},
    \[
        |\mathfrak{B}|\le \sum_{f\in \F} \frac{|\L_{n}(\A,\F)|}{\beta^{|f|-1}}.
    \]
Finally,  using \eqref{Binequality} and \eqref{mainCondition},
    \begin{align*}
        |\L_{n+1}(\A,\F)|
        &\ge |\A||\L_n(\A,\F)| - \sum_{f\in \F} \frac{|\L_{n}(\A,\F)|}{\beta^{|f|-1}} \\
        &= |\L_n(\A,\F)|\left(|\A| - \sum_{f\in \F} \beta^{1-|f|}\right) \\
        &\ge \beta|\L_n(\A,\F)|\,,
    \end{align*}
    which concludes our proof.
\end{proof}

\subsection{Bounded supermultiplicativity of \texorpdfstring{$(|\mathcal{L}_n(\A,\F)|)_{n\ge0}$}{the number of factors}}
We now provide a sufficient condition that implies bounded supermultiplicativity of the sequence $\left(|\mathcal{L}_{n}(\A,\F)|\right)_{n\ge0}$.
\begin{theorem}\label{thmSupermult}
    Let $\A$ be an alphabet and let $\F\subseteq\A^+$ be a set of factors.
    Suppose there exists $\beta>1$ such that $|\mathcal L_{n+1}(\A,\F)|\ge\beta |L_n(\A,\F)|$ for all $n\ge 0$ and 
    \begin{equation}\label{secondCondition}
        C\coloneq 1-\sum_{f\in \F} (|f|-1)\beta^{-|f|}>0\,,
    \end{equation}
    then for all $n,m\ge 0$,
    $$|\mathcal{L}_{n+m}(\A,\F)|\ge C\cdot|\mathcal{L}_{n}(\A,\F)|\cdot|\mathcal{L}_{m}(\A,\F)|\,.$$
\end{theorem}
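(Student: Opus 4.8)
The plan is to mimic the structure of the proof of Theorem \ref{boundOnGrowth}, but to count more carefully the words of length $n+m$ that fail to be in $\mathcal{L}_{n+m}(\A,\F)$ when built by concatenating a word of $\mathcal{L}_n(\A,\F)$ with a word of $\mathcal{L}_m(\A,\F)$. Fix $n,m\ge 0$. Every $w\in\mathcal{L}_n(\A,\F)\cdot\mathcal{L}_m(\A,\F)$ that does \emph{not} lie in $\mathcal{L}_{n+m}(\A,\F)$ must contain an occurrence of some $f\in\F$, and because both halves individually avoid $\F$, this occurrence must straddle the cut between position $n$ and position $n+1$. So I would define, for each $f\in\F$ and each way that $f$ can straddle the cut, a set $\mathfrak{B}_{f}$ of "bad" pairs (equivalently, bad concatenations $w=uv$ with $u\in\mathcal L_n,v\in\mathcal L_m$) and bound $|\mathcal{L}_n\cdot\mathcal{L}_m|\ge |\mathcal{L}_n|\cdot|\mathcal{L}_m|$ is wrong in general — concatenation can collapse — so instead I count ordered pairs: there are exactly $|\mathcal{L}_n(\A,\F)|\cdot|\mathcal{L}_m(\A,\F)|$ pairs $(u,v)$, and I want to show that the number of pairs giving a word in $\mathcal{L}_{n+m}$, counted \emph{with multiplicity of the factorization}, is at least $C\cdot|\mathcal{L}_n(\A,\F)|\cdot|\mathcal{L}_m(\A,\F)|$, which is at most $|\mathcal{L}_{n+m}(\A,\F)|$ since distinct words in $\mathcal{L}_{n+m}$ are obtained. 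The cleanest bookkeeping: let $B$ be the number of pairs $(u,v)\in\mathcal{L}_n(\A,\F)\times\mathcal{L}_m(\A,\F)$ such that $uv\notin\mathcal{L}_{n+m}(\A,\F)$; then $|\mathcal{L}_{n+m}(\A,\F)|\ge |\mathcal{L}_n(\A,\F)|\cdot|\mathcal{L}_m(\A,\F)| - B$, and it suffices to show $B\le \left(\sum_{f\in\F}(|f|-1)\beta^{-|f|}\right)|\mathcal{L}_n(\A,\F)|\cdot|\mathcal{L}_m(\A,\F)|$.

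To bound $B$, I would split according to which $f\in\F$ occurs across the cut and at which offset. If $f$ occurs straddling the boundary, write $f=f_1f_2$ with $f_1$ a nonempty proper prefix of length $k$ where $1\le k\le |f|-1$ (nonempty proper because the occurrence genuinely straddles: $f_1$ sits at the end of $u$ and $f_2$ at the start of $v$). Then $u$ ends with $f_1$, so $u\in \mathcal{L}_{n-k}(\A,\F)\cdot\{f_1\}$, giving at most $|\mathcal{L}_{n-k}(\A,\F)|\le \beta^{-k}|\mathcal{L}_n(\A,\F)|$ choices for $u$ (using the hypothesis $|\mathcal{L}_{i+1}|\ge\beta|\mathcal{L}_i|$ iterated, exactly as in \eqref{FromIHp}, and the empty-set remark when $n<k$); similarly $v\in\{f_2\}\cdot\mathcal{L}_{m-(|f|-k)}(\A,\F)$, giving at most $\beta^{-(|f|-k)}|\mathcal{L}_m(\A,\F)|$ choices for $v$. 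Hence the number of bad pairs attributable to this particular $(f,k)$ is at most $\beta^{-|f|}|\mathcal{L}_n(\A,\F)|\cdot|\mathcal{L}_m(\A,\F)|$. Summing over the $|f|-1$ admissible values of $k$ and over $f\in\F$, and using that every bad pair is attributable to at least one such $(f,k)$ (union bound), yields
\[
    B\le \sum_{f\in\F}\sum_{k=1}^{|f|-1}\beta^{-|f|}|\mathcal{L}_n(\A,\F)|\cdot|\mathcal{L}_m(\A,\F)| = \left(\sum_{f\in\F}(|f|-1)\beta^{-|f|}\right)|\mathcal{L}_n(\A,\F)|\cdot|\mathcal{L}_m(\A,\F)|,
\]
and combining with $|\mathcal{L}_{n+m}(\A,\F)|\ge |\mathcal{L}_n(\A,\F)|\cdot|\mathcal{L}_m(\A,\F)| - B$ gives exactly $|\mathcal{L}_{n+m}(\A,\F)|\ge C\cdot|\mathcal{L}_n(\A,\F)|\cdot|\mathcal{L}_m(\A,\F)|$.

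The step I expect to require the most care is the claim that $|\mathcal{L}_{n+m}(\A,\F)|\ge |\mathcal{L}_n(\A,\F)|\cdot|\mathcal{L}_m(\A,\F)|-B$: one must check that the map $(u,v)\mapsto uv$ restricted to pairs with $uv\in\mathcal{L}_{n+m}(\A,\F)$ is injective, which is immediate since $|u|=n$ is fixed, so $w=uv$ determines $(u,v)$. Thus the image has size $|\mathcal{L}_n(\A,\F)|\cdot|\mathcal{L}_m(\A,\F)|-B$ and is contained in $\mathcal{L}_{n+m}(\A,\F)$. The only other subtlety is the boundary/edge cases — when $n+m<|f|$ there is no room for $f$ to straddle, and when $n<k$ or $m<|f|-k$ the relevant shorter language is empty; in all these cases the corresponding contribution to $B$ is $0$ while the bound $\beta^{-|f|}|\mathcal{L}_n||\mathcal{L}_m|$ is nonnegative, so the inequality is unharmed. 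I would also remark that the hypothesis $|\mathcal{L}_{n+1}|\ge\beta|\mathcal{L}_n|$ is exactly what lets us trade a length-$k$ suffix constraint for a factor $\beta^{-k}$, which is the engine of the whole estimate.
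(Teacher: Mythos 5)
Your proposal is correct and follows essentially the same argument as the paper: decompose the bad concatenations according to which $f\in\F$ straddles the cut and at which offset $i\in\{1,\dots,|f|-1\}$, bound each contribution by $\beta^{-|f|}|\mathcal{L}_n(\A,\F)|\cdot|\mathcal{L}_m(\A,\F)|$ via the growth hypothesis, and apply a union bound. Your bookkeeping with ordered pairs $(u,v)$ is equivalent to the paper's use of $\mathcal{L}_n(\A,\F)\cdot\mathcal{L}_m(\A,\F)$, since concatenation at fixed lengths is injective.
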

Note that if $C\le0$, then the conclusion of this theorem trivially holds as well, but we restrict this theorem to the meaningful case (this comment also applies to the other results of the article).
\begin{proof}
For all $f\in\F$, we let $\mathfrak{B}_f$ be the set of words from $\L_{n}(\A,\F)\cdot\L_{m}(\A,\F)$ that contain an occurrence of $f$.
Since  $\mathcal{L}_{n+m}(\A,\F) \subseteq \mathcal{L}_{n}(\A,\F)\cdot\mathcal{L}_{m}(\A,\F)$, this implies
\begin{equation*}
    \mathcal{L}_{n+m}(\A,\F) = \mathcal{L}_{n}(\A,\F)\cdot\mathcal{L}_{m}(\A,\F) -\bigcup_{f\in\F} \mathfrak{B}_f\,,
\end{equation*}
hence
\begin{equation}\label{sizeOfLmn}
    |\mathcal{L}_{n+m}(\A,\F)| \ge |\mathcal{L}_{n}(\A,\F)|\cdot|\mathcal{L}_{m}(\A,\F)| -\sum_{f\in\F} |\mathfrak{B}_f|\,.
\end{equation}

For any $f\in\F$ and $w\in \mathfrak{B}_f$, $f$ occurs in $w$ around the concatenation, that is, there exists $i\in\{1,\ldots, |f|-1\}$ such that $w=ufv$ with $|u|=n-i$ and $|v|=m+i-|f|$. By definition, $u\in \mathcal{L}_{n-i}(\A,\F)$ and $v\in \mathcal{L}_{m+i-|f|}(\A,\F)$. Hence,
\begin{equation*}
    \mathfrak{B}_f \subseteq \bigcup_{i=1}^{|f|-1}\left\{ufv : u\in \mathcal{L}_{n-i}(\A,\F), v\in \mathcal{L}_{m+i-|f|}(\A,\F) \right\}\,.
\end{equation*}
By the condition $|\mathcal L_{n+1}(\A,\F)|\ge\beta |\mathcal L_n(\A,\F)|$ for all $n\ge 0$, we have, for all $f\in \F$,
\begin{align*}
    |\mathfrak{B}_f|
    &\le \sum_{i=1}^{|f|-1}|\mathcal{L}_{n-i}(\A,\F)|\cdot|\mathcal{L}_{m+i-|f|}(\A,\F)|\\
    &\le \sum_{i=1}^{|f|-1}\frac{|\mathcal{L}_{n}(\A,\F)|}{\beta^{i}}\cdot\frac{|\mathcal{L}_{m}(\A,\F)|}{\beta^{|f|-i}}\\
    &= (|f|-1)\cdot\beta^{-|f|}|\mathcal{L}_{n}(\A,\F)|\cdot|\mathcal{L}_{m}(\A,\F)|\,.
\end{align*}
Note that although $\mathcal L_{n-i}$ and $\mathcal L_{m+i-|f|}$ are undefined when $n,m$ are too small, the final \emph{upper} bound still holds. 

Using this in \eqref{sizeOfLmn}, we get
\begin{align*}
    |\mathcal{L}_{n+m}(\A,\F)| 
    &\ge  |\mathcal{L}_{n}(\A,\F)|\cdot|\mathcal{L}_{m}(\A,\F)|\left(1-\sum_{f\in\F} (|f|-1)\beta^{-|f|}\right)\\
    &=C\cdot|\mathcal{L}_{n}(\A,\F)|\cdot|\mathcal{L}_{m}(\A,\F)|\,,
\end{align*}
as desired.
\end{proof}

Remember that for all $\F$, the sequence $(|\mathcal{L}_{n}(\A,\F)|)_{n\ge0}$ is submultiplicative. Fekete's lemma implies that for all $n$,
$|\mathcal{L}_{n}(\A,\F)|\ge \alpha(\A,\F)^n\,.$ 
In general, this bound informs us about the behavior of $(|\mathcal{L}_{n}(\A,\F)|)_{n\ge0}$, but it is also useful to compute upper bounds on $ \alpha(\A,\F)$.
Theorem \ref{thmSupermult} implies a form of supermultiplicativity that plays a symmetric role.

\begin{corollary}\label{corSupermult}
    Under the conditions of Theorem \ref{thmSupermult}, we have for all $n\ge0$,
    \begin{equation*}
    \alpha(\A,\F)^n\le|\mathcal{L}_{n}(\A,\F)|\le C^{-1}\cdot\alpha(\A,\F)^n\,.
    \end{equation*}
\end{corollary}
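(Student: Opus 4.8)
The plan is to treat the two inequalities separately. The left inequality, $\alpha(\A,\F)^n\le|\mathcal{L}_n(\A,\F)|$, is precisely \eqref{submultiplicative}: it comes from Fekete's lemma applied to the submultiplicative sequence $(|\mathcal{L}_n(\A,\F)|)_{n\ge0}$, so nothing new is needed there.

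For the right inequality, the idea is to absorb the constant $C$ into the sequence so that the weak supermultiplicativity furnished by Theorem \ref{thmSupermult} becomes genuine supermultiplicativity. I would set $s_n\coloneq C\cdot|\mathcal{L}_n(\A,\F)|$ for $n\ge1$. Since $\beta>1$ and $|\mathcal{L}_{n+1}(\A,\F)|\ge\beta|\mathcal{L}_n(\A,\F)|$ with $|\mathcal{L}_0(\A,\F)|=1$, each $|\mathcal{L}_n(\A,\F)|$ is positive, so $(s_n)$ is a positive sequence. Multiplying the conclusion of Theorem \ref{thmSupermult} by $C$ gives, for all $n,m\ge1$,
\[
    s_{n+m}=C|\mathcal{L}_{n+m}(\A,\F)|\ge C\cdot C|\mathcal{L}_n(\A,\F)|\cdot|\mathcal{L}_m(\A,\F)|=s_ns_m\,,
\]
so $(s_n)$ is supermultiplicative.

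Next I would invoke the supermultiplicative form of Fekete's lemma recalled in the introduction, namely $\lim_{n\to\infty}\sqrt[n]{s_n}=\sup_n\sqrt[n]{s_n}$; in particular $\sqrt[n]{s_n}\le\lim_{n\to\infty}\sqrt[n]{s_n}$ for every $n$. Since $\lim_{n\to\infty}\sqrt[n]{s_n}=\lim_{n\to\infty}C^{1/n}|\mathcal{L}_n(\A,\F)|^{1/n}=1\cdot\alpha(\A,\F)=\alpha(\A,\F)$, this yields $C|\mathcal{L}_n(\A,\F)|=s_n\le\alpha(\A,\F)^n$ for all $n\ge1$, which rearranges to $|\mathcal{L}_n(\A,\F)|\le C^{-1}\alpha(\A,\F)^n$. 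The case $n=0$ holds trivially, since $|\mathcal{L}_0(\A,\F)|=1$ and $C\le1$ (every term $(|f|-1)\beta^{-|f|}$ in \eqref{secondCondition} is nonnegative), so $C^{-1}\ge1$.

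There is essentially no obstacle; the only points to watch are checking positivity of $(s_n)$ so that the roots and the dual Fekete lemma make sense, and noting $C\le1$ for the $n=0$ edge case. Alternatively, one can avoid citing the dual Fekete lemma and argue directly: iterating Theorem \ref{thmSupermult} gives $|\mathcal{L}_{kn}(\A,\F)|\ge C^{k-1}|\mathcal{L}_n(\A,\F)|^k$, and taking $(kn)$-th roots and letting $k\to\infty$ (along the subsequence $(kn)_k$, whose root values converge to $\alpha(\A,\F)$) produces $\alpha(\A,\F)^n\ge C|\mathcal{L}_n(\A,\F)|$ directly. This second route just reproves the relevant half of the dual Fekete lemma in this special case.
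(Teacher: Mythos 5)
Your proof is correct and follows essentially the same route as the paper: both absorb $C$ into the sequence (your $s_n$ is the paper's $L'_n = C\cdot|\mathcal{L}_n(\A,\F)|$), apply the supermultiplicative form of Fekete's lemma, and identify the limit with $\alpha(\A,\F)$ to get the upper bound, with the lower bound coming from \eqref{submultiplicative}. Your extra checks (positivity, the $n=0$ case via $C\le 1$) and the alternative iteration argument are fine but do not change the substance.
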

\begin{proof}
By Theorem \ref{thmSupermult}, we have for all $n,m\ge0$,
$$|\mathcal{L}_{n+m}(\A,\F)|\ge C\cdot|\mathcal{L}_{n}(\A,\F)|\cdot|\mathcal{L}_{m}(\A,\F)|\,.$$
Consider the sequence $(L'_n)_{n\ge0}$ such that for all $n$, $L'_n =  C\cdot|\mathcal{L}_{n}(\A,\F)|$,
then for all $n,m\ge0$,
\[
    L'_{n+m}\ge  L'_n\cdot L'_m\,.
\]
That is, $(L'_n)_{n\ge0}$ is a supermultiplicative sequence. The growth rate of $(L'_n)_{n\ge0}$ is also the growth rate of $|\L_n(\A,\F)|$, which is $\alpha(\A,\F)$ by definition. By Fekete's lemma for (positive) supermultiplicative sequences, we have
for all $n\ge0$, $(L'_n)^{1/n}\le \alpha(\A,\F)$, which implies for all $n$,
$$|\mathcal{L}_{n}(\A,\F)|\le C^{-1}\cdot\alpha(\A,\F)^n\,,$$
as desired.
\end{proof}
Equivalently, the conclusion of this corollary can be rewritten as
\begin{equation*}
(C|\mathcal{L}_{n}(\A,\F)|)^{1/n}\le\alpha(\A,\F)\le|\mathcal{L}_{n}(\A,\F)|^{1/n}\,.
\end{equation*}
This implies that the difference between $\alpha(\A,\F)$ and $|\mathcal{L}_{n}(\A,\F)|^{1/n}$ goes to $0$ as $n$ goes to infinity. Moreover, if we know a positive lower bound on $C$, then for all $\varepsilon>0$, we can find a value $n_\varepsilon$ such that $|\ |\mathcal{L}_{n_\varepsilon}(\A,\F)|^{1/n_\varepsilon}-\alpha(\A,\F)|\le \varepsilon$.
As a direct consequence, we have the following result.
\begin{theorem}
    There is an algorithm that solves the following problem for any set of forbidden factors $\F$ that satisfy the conditions of Theorem \ref{thmSupermult}:\\
    \begin{tabularx}{\textwidth}{@{\hspace{\parindent}} l X c}
    \textbf{Input:} &  An oracle recognizing $\F$,
 a real $C$ such that $0<C\le1-\sum_{f\in \F} (|f|-1)\beta^{-|f|}$,
       and a positive real $\varepsilon$.\\
    \textbf{Output:} & A real number $\alpha'$ such that
    $|\alpha'-\alpha(\A,\F)|\le \varepsilon\,.$
  \end{tabularx}
\end{theorem}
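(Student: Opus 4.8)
The plan is to turn the two-sided estimate of Corollary~\ref{corSupermult} into a terminating search. Write $\widehat{C}\coloneq 1-\sum_{f\in\F}(|f|-1)\beta^{-|f|}$ for the constant of Theorem~\ref{thmSupermult}; since $\F$ meets the hypotheses of that theorem, $\widehat{C}>0$, and the supplied real $C$ satisfies $0<C\le\widehat{C}\le 1$. By Corollary~\ref{corSupermult} (equivalently, the displayed inequality stated right after it) together with the monotonicity of $x\mapsto x^{1/n}$ on $[0,\infty)$, we get, for every $n\ge 1$,
\[
\bigl(C\,|\mathcal{L}_n(\A,\F)|\bigr)^{1/n}\ \le\ \alpha(\A,\F)\ \le\ |\mathcal{L}_n(\A,\F)|^{1/n}\,.
\]
Thus $\alpha(\A,\F)$ lies in an interval of width at most $\delta_n\coloneq|\mathcal{L}_n(\A,\F)|^{1/n}\bigl(1-C^{1/n}\bigr)$. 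The quantitative point I would record first is that $\delta_n\to 0$: indeed $|\mathcal{L}_n(\A,\F)|^{1/n}\le|\A|$ since $\mathcal{L}_n(\A,\F)\subseteq\A^n$, while $C^{1/n}\to 1$ because $C\in(0,1]$, so $\delta_n\le|\A|\bigl(1-C^{1/n}\bigr)\to 0$.

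Next I would observe that $|\mathcal{L}_n(\A,\F)|$ is computable from the oracle for every fixed $n$: enumerate the finitely many words of length $n$ over $\A$, and for each such word query the oracle on each of its $O(n^2)$ factors; the word lies in $\mathcal{L}_n(\A,\F)$ precisely when none of those factors belongs to $\F$. This makes only finitely many oracle calls, so it halts.

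The algorithm is then the obvious search: for $n=1,2,3,\dots$ in turn, compute $\ell_n\coloneq|\mathcal{L}_n(\A,\F)|$ and (to sufficient precision) the real $\delta_n=\ell_n^{1/n}\bigl(1-C^{1/n}\bigr)$; at the first $n$ for which $\delta_n\le\varepsilon$, output $\alpha'\coloneq\ell_n^{1/n}$ (if a genuine rational output is required, run the test with $\varepsilon/2$ in place of $\varepsilon$ and output a rational within $\varepsilon/2$ of $\ell_n^{1/n}$). Correctness follows directly from the bracketing: at that step both $\alpha(\A,\F)$ and $\alpha'$ lie in $\bigl[(C\ell_n)^{1/n},\ \ell_n^{1/n}\bigr]$, hence $|\alpha'-\alpha(\A,\F)|\le\ell_n^{1/n}-(C\ell_n)^{1/n}=\delta_n\le\varepsilon$. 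Termination is exactly the observation of the first paragraph: $\delta_n\le|\A|\bigl(1-C^{1/n}\bigr)\to 0$, so the stopping test eventually succeeds.

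I do not expect a genuine obstacle here — all the mathematical substance is already contained in Theorem~\ref{thmSupermult} and Corollary~\ref{corSupermult}, and what is left is bookkeeping. The three things worth stating carefully are: (i) that the width $\delta_n$ of the bracketing interval is controlled \emph{uniformly in $n$} (through $|\mathcal{L}_n(\A,\F)|\le|\A|^n$), which is precisely what forces the search to halt; (ii) that replacing the true constant $\widehat{C}$ by the weaker lower bound $C$ only widens the interval, so $\alpha(\A,\F)$ still lies inside it; and (iii) the routine fact that whether $\delta_n\le\varepsilon$ can be decided by computing the reals $\ell_n^{1/n}$, $C^{1/n}$ and $\varepsilon$ to enough precision — the boundary case $\delta_n=\varepsilon$ being harmless since $\delta_n\to 0<\varepsilon$. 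It is also implicit that the algorithm knows $\A$ (it needs it to list words of length $n$); conversely it never needs to know $\beta$.
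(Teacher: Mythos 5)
Your proposal is correct and follows essentially the same route as the paper, which obtains this theorem directly from the two-sided bound $(C|\mathcal{L}_{n}(\A,\F)|)^{1/n}\le\alpha(\A,\F)\le|\mathcal{L}_{n}(\A,\F)|^{1/n}$ of Corollary \ref{corSupermult}: compute $|\mathcal{L}_n(\A,\F)|$ with finitely many oracle queries and output $|\mathcal{L}_n(\A,\F)|^{1/n}$ once the interval width (controlled via $|\mathcal{L}_n(\A,\F)|^{1/n}\le|\A|$ and $C^{1/n}\to 1$) drops below $\varepsilon$. Your added bookkeeping about precision and the stopping test is fine and matches the paper's intent.
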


\subsection{Application}
To illustrate these results, we consider the case where $\F$ contains at most one factor of each length and let $i= \min \{|f|:f\in \F\}$. 
Miller \cite{miller} gave simple sufficient conditions in terms of $i$ and $|\A|$ such that $\alpha(\A,\F)\ge1$. He also proved that these conditions are optimal in the sense that for every $(i,|\A|)$ that does not respect the condition, there is a choice of $\F$ such that  $\alpha(\A,\F)=0$. 
The second author improved this result by providing lower bounds on $\alpha(\A,\F)$ for each of these cases \cite{Rosenfeld2022Sep}. Here, we go further by classifying the pairs $(i,|\A|)$ such that $(\L_n(\A,\F))_{n\ge1}$ is necessarily boundedly submultiplicative.
\begin{corollary}\label{onepersize}
    Let $\F$ be a set of factors that contains at most one factor of each length, and let $i= \min \{|f|:f\in \F\}$. 
    Suppose there exists $\beta>1$ such that
    \begin{equation}\label{cond1corollary}
        |\A|-\frac{\beta^{2-i}}{\beta-1} \ge \beta\,,
    \end{equation}
    and 
    \begin{equation}\label{cond2corollary}
        C\coloneq 1-\frac{\beta ^{1-i}(1+(\beta-1)(i-1))}{(\beta-1)^2}>0\,,
    \end{equation}
    then for all $n\ge0$,
    \begin{equation}\label{conscorollary}
    \alpha(\A,\F)^n\le|\mathcal{L}_{n}(\A,\F)|\le C^{-1}\cdot\alpha(\A,\F)^n\,.
    \end{equation}
\end{corollary}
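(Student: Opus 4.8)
The plan is to reduce Corollary \ref{onepersize} to Theorem \ref{thmSupermult} (via Theorem \ref{boundOnGrowth} for the linear growth hypothesis) and then invoke Corollary \ref{corSupermult}. The point is that the two displayed hypotheses \eqref{cond1corollary} and \eqref{cond2corollary} are tailored so that each one makes a sum over $\F$ tractable: since $\F$ contains at most one factor of each length and the shortest has length $i$, any sum $\sum_{f\in\F}g(|f|)$ of nonnegative terms is bounded above by the full series $\sum_{k\ge i}g(k)$, which is an explicit geometric-type series in $\beta^{-1}$.

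First I would verify the linear growth condition. Using $|\{f\in\F:|f|=k\}|\le1$ for $k\ge i$ and $=0$ for $k<i$, I would bound
\[
    \sum_{f\in\F}\beta^{1-|f|}\le\sum_{k\ge i}\beta^{1-k}=\frac{\beta^{2-i}}{\beta-1}\,,
\]
so that \eqref{cond1corollary} gives $|\A|\ge\beta+\sum_{f\in\F}\beta^{1-|f|}$, which is exactly \eqref{mainCondition}. Theorem \ref{boundOnGrowth} then yields $|\mathcal{L}_{n+1}(\A,\F)|\ge\beta|\mathcal{L}_{n}(\A,\F)|$ for all $n\ge0$, the first hypothesis of Theorem \ref{thmSupermult}.

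Next I would control the constant. Again by the at-most-one-per-length hypothesis,
\[
    \sum_{f\in\F}(|f|-1)\beta^{-|f|}\le\sum_{k\ge i}(k-1)\beta^{-k}\,,
\]
and I would evaluate the right-hand side in closed form by shifting the index by $i$ (or differentiating the geometric series), obtaining $\sum_{k\ge i}(k-1)\beta^{-k}=\frac{\beta^{1-i}(1+(\beta-1)(i-1))}{(\beta-1)^2}$. Hence the quantity $C'\coloneq 1-\sum_{f\in\F}(|f|-1)\beta^{-|f|}$ of \eqref{secondCondition} satisfies $C'\ge C$, where $C$ is the constant of \eqref{cond2corollary}; in particular $C'>0$ whenever \eqref{cond2corollary} holds, so the second hypothesis of Theorem \ref{thmSupermult} is met.

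With both hypotheses of Theorem \ref{thmSupermult} in hand, Corollary \ref{corSupermult} gives $\alpha(\A,\F)^n\le|\mathcal{L}_{n}(\A,\F)|\le (C')^{-1}\alpha(\A,\F)^n$, and since $C'\ge C>0$ we have $(C')^{-1}\le C^{-1}$, which yields the (weaker, but self-contained) bound \eqref{conscorollary}. The only mildly delicate step is the closed-form evaluation of $\sum_{k\ge i}(k-1)\beta^{-k}$, which must be carried out carefully so as to land on the exact expression in \eqref{cond2corollary}; everything else is just plugging the two previously established results together.
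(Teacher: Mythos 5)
Your proposal is correct and follows essentially the same route as the paper: bound $\sum_{f\in\F}\beta^{1-|f|}$ and $\sum_{f\in\F}(|f|-1)\beta^{-|f|}$ by the full series $\sum_{\ell\ge i}\beta^{1-\ell}=\frac{\beta^{2-i}}{\beta-1}$ and $\sum_{\ell\ge i}(\ell-1)\beta^{-\ell}=\frac{\beta^{1-i}(1+(\beta-1)(i-1))}{(\beta-1)^2}$, then apply Theorem \ref{boundOnGrowth} and Corollary \ref{corSupermult}. Your explicit remark that the theorem's constant $C'$ satisfies $C'\ge C$, so the stated bound with $C^{-1}$ follows, is a detail the paper leaves implicit, and your closed-form evaluation of the second series matches the paper's identity.
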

\begin{proof}
    We use the fact that
    \begin{equation*}
        \sum_{\ell\ge i} \beta^{1-\ell}= \frac{\beta^{2-i}}{\beta-1}\,,
    \end{equation*}
    and
    \begin{equation*}
        \sum_{\ell\ge i} (\ell-1)\beta^{-\ell}= \frac{\beta ^{1-i}(1+(\beta-1)(i-1))}{(\beta-1)^2}\,,
    \end{equation*}
    to apply Corollary \ref{corSupermult}. 
\end{proof}
For different small values of $|\A|$ and $i$, we provide in Table \ref{tablebetaC}, values of $\beta$ and a lower bound on $C$ that satisfy the condition of Corollary \ref{onepersize}. 
The first three pairs of cells of this table are empty, because there is no $\beta$ satisfying equation \eqref{cond1corollary}. In these three cases the corresponding subshift is empty for some choices of $\F$ as already argued in \cite{miller,Rosenfeld2022Apr}.
\begin{table}[t]
    \begin{center}
        \begin{tabular}{ |c||l|l||l|l||l|l|} 
            \hline
            \backslashbox{$i$}{$|\A|$} & \multicolumn{2}{c||}{$2$} & \multicolumn{2}{c||}{$3$}& \multicolumn{2}{c|}{$4$}\\\hhline{|=||=|=||=|=||=|=|}
            $2$ & && $\beta=2$& $C=0$& $\beta=3.6$& $C\ge 0.85$\\\hline
            $3$ & && $\beta=2.75$& $C=0.8$&  $\beta=3.91$& $C\ge 0.94$\\\hline
            $4$ & && $\beta=2.9$& $C\ge 0.92$& $\beta=3.97$& $C\ge 0.98$\\\hline
            $5$ & $\beta=1.72$& $C\ge 0.14$& $\beta=2.97$& $C\ge 0.97$& $\beta=3.99$& $C\ge 0.99$\\\hline
            $6$ & $\beta=1.91$& $C\ge 0.73$& $\beta=2.99$& $C\ge 0.98$& $\beta=3.99$& $C\ge 0.998$\\\hline
        \end{tabular}
        \caption{Values of $\beta$ and a lower bound (obtained by rounding down) on the corresponding $C$ from Corollary \ref{onepersize}, for small values of $|\A|$ and $i$.\label{tablebetaC}}
    \end{center}
\end{table}
As we will see, the case $|\A|=3$ and $i=2$ is also optimal. Over the alphabet $\A=\{0,1,2\}$, consider the set 
$\F=01^*2:=\{02, 012, 0112, 01112, \ldots\}.$
It is not hard to see that $\mathcal{L}(\A,\F)$ is exactly the set of words that contain no $2$ after a $0$, that is, $\mathcal{L}(\A,\F)= \{u0v: u\in\{1,2\}^*,v\in\{0,1\}^*\}\cup\{1,2\}^*$. 
One easily verifies that for all $n$,  $|\mathcal{L}_n(\A,\F)|=(n+2)2^{n-1}$.
So there is no $C>0$ for which \eqref{conscorollary} holds for this particular choice of $\F$. The choice of $\F$ was guided by the proofs.
Indeed, $\F$ was constructed to ensure that the different crucial inequalities from the proofs are tight.

When $C$ is known and if we can compute $\mathcal{L}_{n}(\A,\F)$ for any $n$, then we can compute arbitrarily good approximations of $\alpha(\A,\F)$. 
Therefore, a simple inspection of Table \ref{tablebetaC} implies the following.
\begin{corollary}
Let $\A$ be an alphabet and  $i$ be an integer such that   
    \begin{itemize}
        \item $|\A| \ge4$ and $i\ge2$,
        \item or $|\A| = 3$ and $i\ge3$,
        \item or $|\A| = 2$ and $i\ge5$.
    \end{itemize}
Then there is an algorithm that solves the following problem:\\
    \begin{tabularx}{\textwidth}{@{\hspace{\parindent}} l X c}
    \textbf{Input:} &  An oracle recognizing a set $\F$  that contains at most one factor of each length and such that $\min \{|f|:f\in \F\}\ge i$
       and a positive real $\varepsilon$.\\
    \textbf{Output:} & A real number $\alpha'$ such that
    $        |\alpha'-\alpha(\A,\F)|\le \varepsilon\,.$
  \end{tabularx}
\end{corollary}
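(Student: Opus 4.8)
The plan is to deduce this from Corollary~\ref{onepersize} together with the approximation scheme stated just above it, the only extra ingredient being that $|\mathcal{L}_n(\A,\F)|$ is computable from an oracle for $\F$.

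First I would reduce to the three boundary pairs $(|\A|,i)\in\{(4,2),(3,3),(2,5)\}$. Indeed, for a fixed $\beta>1$ the quantities $\frac{\beta^{2-i}}{\beta-1}$ and $\frac{\beta^{1-i}(1+(\beta-1)(i-1))}{(\beta-1)^2}$ are nonincreasing in $i$ (for the second this is a one-line check: consecutive ratios are at most $\beta^{-1}\cdot\beta=1$), and the left-hand side of \eqref{cond1corollary} is nondecreasing in $|\A|$; hence if some $\beta$ satisfies \eqref{cond1corollary} and \eqref{cond2corollary} for a pair $(|\A_0|,i_0)$, then the same $\beta$ works for every $(|\A|,i)$ with $|\A|\ge|\A_0|$ and $i\ge i_0$, and it produces a value of $C$ at least as large. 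Table~\ref{tablebetaC} supplies such a $\beta$, together with a positive rational lower bound on $C$, for each of the three boundary pairs; so for every $(|\A|,i)$ in the statement we may fix, once and for all, a $\beta>1$ and a rational $C_0$ with $0<C_0\le 1-\sum_{\ell\ge i}(\ell-1)\beta^{-\ell}$. For any admissible $\F$ (at most one factor per length, all factors of length $\ge i$) we have $\sum_{f\in\F}\beta^{1-|f|}\le\sum_{\ell\ge i}\beta^{1-\ell}$ and $\sum_{f\in\F}(|f|-1)\beta^{-|f|}\le\sum_{\ell\ge i}(\ell-1)\beta^{-\ell}$, so Corollary~\ref{onepersize} applies with this $\beta$, and its conclusion \eqref{conscorollary} gives, for all $n$,
\[
  \bigl(C_0\,|\mathcal{L}_n(\A,\F)|\bigr)^{1/n}\le\alpha(\A,\F)\le|\mathcal{L}_n(\A,\F)|^{1/n}\,.
\]

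Next I would check computability. A word of length $n$ contains no factor of length $>n$, so $\mathcal{L}_n(\A,\F)=\mathcal{L}_n\bigl(\A,\F\cap\A^{\le n}\bigr)$; querying the oracle on each of the finitely many words of $\A^{\le n}$ determines the finite set $\F\cap\A^{\le n}$, after which $|\mathcal{L}_n(\A,\F)|$ is obtained by brute force (or from the automaton of that finite set). Given $\varepsilon>0$, the displayed inequalities yield
\[
  0\le|\mathcal{L}_n(\A,\F)|^{1/n}-\alpha(\A,\F)\le|\mathcal{L}_n(\A,\F)|^{1/n}\bigl(1-C_0^{1/n}\bigr)\le|\A|\,\bigl(1-C_0^{1/n}\bigr)\,,
\]
where the last step uses $|\mathcal{L}_n(\A,\F)|\le|\A|^n$. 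Since $C_0>0$, the right-hand side is a known quantity tending to $0$, so the algorithm can effectively choose $n_\varepsilon$ with $|\A|\bigl(1-C_0^{1/n_\varepsilon}\bigr)\le\varepsilon$, compute $|\mathcal{L}_{n_\varepsilon}(\A,\F)|$ as above, and output $\alpha'=|\mathcal{L}_{n_\varepsilon}(\A,\F)|^{1/n_\varepsilon}$, which then satisfies $|\alpha'-\alpha(\A,\F)|\le\varepsilon$.

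I do not expect a genuine obstacle: the substance is entirely in Corollary~\ref{onepersize}. The one point that needs a little care is the \emph{uniformity} of the constants over the whole class of admissible $\F$ — namely that a single $\beta$, hence a single positive lower bound $C_0$ on $C$, serves all of them — which is exactly what the monotonicity observation of the first step provides; everything else is bookkeeping about oracle queries and about the rate at which $C_0^{1/n}\to 1$.
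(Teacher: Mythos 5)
Your proposal is correct and follows essentially the same route as the paper, which proves this corollary simply by combining the remark that a known positive lower bound on $C$ plus computability of $|\mathcal{L}_n(\A,\F)|$ yields arbitrarily good approximations of $\alpha(\A,\F)$ with an inspection of Table~\ref{tablebetaC}. Your write-up merely makes explicit the details the paper leaves implicit (monotonicity in $i$ and $|\A|$ to get a uniform $\beta$ and $C_0$ from the boundary pairs, the finitely many oracle queries needed for $|\mathcal{L}_n(\A,\F)|$, and the effective choice of $n_\varepsilon$), all of which check out.
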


For the four missing cases, it is not clear whether $\alpha(\A,\F)$ is computable (but it would not be surprising if it could be computed with other techniques).

\subsection{Analytic considerations}\label{Analyticsubsec}
We say that $\F$ is \emph{non-trivial} if it contains at least one factor of length at least $2$.
Given $\F$, we define the function $\omega$ such that
\begin{equation*}
    \omega(x)=x+\sum_{f\in\F}x^{1-|f|}\,,
\end{equation*}
for all $x$ such that the infinite series is well-defined.
The main condition on $\beta$ from \eqref{mainCondition} in Theorem \ref{boundOnGrowth} can be rewritten as
\begin{equation*}
    \omega(\beta)\le|\mathcal{A}|\,.
\end{equation*}
 The derivative of $\omega$ is given by
 \begin{equation*}
    \omega'(x)=1-\sum_{f\in\F}(|f|-1)x^{-|f|}\,,
\end{equation*}
for all $x$ such that the infinite series is well-defined.
The second condition of Theorem \ref{thmSupermult}, equation \eqref{secondCondition}, is the same as
\begin{equation*}
    C\coloneq\omega'(\beta)> 0\,.
\end{equation*}

The nature of the proofs of these two theorems are quite similar, but it is not completely clear why the derivative of the first condition gives the second condition. We now try to provide a fragment of explanation for this. The second derivative of $\omega$ is, for all $x$,
\begin{equation*}
    \omega''(x)=\sum_{f\in\F}(|f|-1)|f|x^{-|f|-1}\,.
\end{equation*}
As long as $\F$ is non-trivial this function is positive for $\beta>1$, which implies that $\omega'$ is increasing. This means that the set of solutions $\{\beta>1:\omega(\beta)\le|\mathcal{A}|\}$ is either empty or an interval. So there are four possibilities:
\begin{itemize}
    \item $\{\beta>1:\omega(\beta)\le|\mathcal{A}|\}=\emptyset$,
    \item $\{\beta>1:\omega(\beta)\le|\mathcal{A}|\}=\{x\}$, in which case $\omega'(x)=0$,
    \item $\{\beta>1:\omega(\beta)\le|\mathcal{A}|\}$ is a proper interval $I$ closed on the right at $y=\max I$, in which case, $\omega'(y)>0$.
\end{itemize}
In other words, we can apply Theorem \ref{thmSupermult} with some $C>0$ if and only if there are at least two $\beta$ that satisfy the conditions of Theorem \ref{boundOnGrowth}. For instance, we can deduce the following form of Corollary \ref{corSupermult}, which is equivalent to the statement of Theorem \ref{intromult} given in the introduction.
\begin{corollary}\label{cor-analytic}
    Let $\beta \ge1$ be such that $\omega(\beta)<|\A|$, then there exists some $C>0$ such that
    \begin{equation*}
    \alpha(\A,\F)^n\le|\mathcal{L}_{n}(\A,\F)|\le C^{-1}\cdot\alpha(\A,\F)^n\,.
    \end{equation*}
\end{corollary}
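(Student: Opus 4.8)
The plan is to reduce Corollary~\ref{cor-analytic} to Theorem~\ref{thmSupermult} by producing, from a single $\beta$ with $\omega(\beta)<|\A|$, a (possibly different) value $\beta'>1$ that simultaneously satisfies both hypotheses of Theorem~\ref{thmSupermult}: the multiplicative gap $|\mathcal L_{n+1}(\A,\F)|\ge \beta'|\mathcal L_n(\A,\F)|$ for all $n\ge 0$, and the positivity $\omega'(\beta')>0$. The first of these will come from Theorem~\ref{boundOnGrowth} applied to $\beta'$ (which requires $\omega(\beta')\le|\A|$), and the second is exactly condition~\eqref{secondCondition}. So everything hinges on locating $\beta'$ in the interior of the solution set $\{x>1:\omega(x)\le|\A|\}$ where $\omega'$ is already positive.

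First I would dispose of the trivial case: if $\F$ is trivial (contains only length-one factors, or is empty), then $\omega(x)=x+|\F|$, so $\omega'(x)=1>0$ everywhere and one simply takes $\beta'=\beta$; the conclusion follows directly from Theorems~\ref{boundOnGrowth} and~\ref{thmSupermult} with $C=1-0>0$. (One should also note $\omega(\beta)<|\A|$ forces $|\A|\ge 2$, so the language is nonempty and the statement is meaningful.) Now assume $\F$ is non-trivial. As observed in Section~\ref{Analyticsubsec}, $\omega''>0$ on $(1,\infty)$, so $\omega'$ is strictly increasing there; moreover $\omega$ itself is strictly convex, so the sublevel set $S=\{x>1:\omega(x)\le|\A|\}$ is an interval. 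Since $\beta\in S$ and $\omega(\beta)<|\A|$ strictly, $\beta$ is not the right endpoint of $S$: by continuity of $\omega$ there is some $\beta'>\beta$ with $\beta'\in S$, i.e.\ $\omega(\beta')\le|\A|$.

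The key step is to check $\omega'(\beta')>0$ for such a $\beta'$, and here I would use strict convexity of $\omega$ directly rather than reasoning about where $\omega'$ vanishes. Because $\omega$ is strictly convex on $(1,\infty)$ and $\beta<\beta'$ with $\omega(\beta)<|\A|$ and $\omega(\beta')\le|\A|$, i.e.\ $\omega(\beta')\ge\omega(\beta)$... more carefully: strict convexity gives, for the chord from $\beta$ to $\beta'$, that $\omega(\beta')\le\omega(\beta)$ would contradict nothing by itself, so instead I argue via the mean value / convexity inequality $\omega(\beta')\ge \omega(\beta)+\omega'(\beta')(\beta'-\beta)$ — wait, convexity gives $\omega(\beta)\ge\omega(\beta')+\omega'(\beta')(\beta-\beta')$, i.e.\ $\omega'(\beta')(\beta'-\beta)\ge\omega(\beta')-\omega(\beta)$. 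That lower bound can be negative, so it is not quite enough. The clean fix is to choose $\beta'$ more carefully: pick $\beta'\in S$ with $\omega(\beta')>\omega(\beta)$, which is possible because $\omega$ restricted to $S$ attains values arbitrarily close to $|\A|>\omega(\beta)$ on the portion of $S$ to the right of $\beta$ (this uses only continuity of $\omega$ and the intermediate value theorem, plus that $\beta$ is interior). Then $\omega(\beta')-\omega(\beta)>0$ and $\beta'>\beta$, so the convexity inequality $\omega'(\beta')\ge \dfrac{\omega(\beta')-\omega(\beta)}{\beta'-\beta}>0$ gives exactly what we need.

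Having secured $\beta'>1$ with $\omega(\beta')\le|\A|$ and $\omega'(\beta')>0$, Theorem~\ref{boundOnGrowth} (whose hypothesis~\eqref{mainCondition} is precisely $\omega(\beta')\le|\A|$) yields $|\mathcal L_{n+1}(\A,\F)|\ge\beta'|\mathcal L_n(\A,\F)|$ for all $n\ge0$, and then Theorem~\ref{thmSupermult} applies with this $\beta'$ and $C\coloneq\omega'(\beta')>0$, giving $\alpha(\A,\F)^n\le|\mathcal L_n(\A,\F)|\le C^{-1}\alpha(\A,\F)^n$ for all $n$ via Corollary~\ref{corSupermult}. I expect the only subtle point to be the one flagged above: the naive choice $\beta'=\beta$ fails when $\omega'(\beta)\le 0$ (which genuinely can happen, e.g.\ at the left part of $S$), so one must push $\beta'$ rightward \emph{and} ensure $\omega(\beta')$ strictly exceeds $\omega(\beta)$ so that the convexity slope bound is strictly positive; handling the degenerate endpoint behavior of $S$ correctly (it is closed on the right when proper, but could in principle be unbounded if $\F$ is finite) is a minor case analysis but worth stating cleanly.
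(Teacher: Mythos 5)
Your proposal is correct and follows essentially the same route as the paper: the paper deduces Corollary~\ref{cor-analytic} from the discussion in Subsection~\ref{Analyticsubsec}, using convexity of $\omega$ to note that a strict solution $\omega(\beta)<|\A|$ forces the sublevel set $\{x>1:\omega(x)\le|\A|\}$ to be a proper interval whose right part contains a point $\beta'$ with $\omega(\beta')\le|\A|$ and $\omega'(\beta')>0$, after which Theorem~\ref{boundOnGrowth}, Theorem~\ref{thmSupermult} and Corollary~\ref{corSupermult} are chained exactly as you do. Your convexity chord inequality replacing the paper's endpoint trichotomy is only a cosmetic variation (and your worry that the sublevel set could be unbounded is moot, since $\omega(x)\ge x$ always bounds it by $|\A|$).
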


The conditions of this version are easier to verify, however, it does not provide an explicit value for $C$ other than $C\coloneq\omega'(\beta)$.
Tools from analytic combinatorics might be helpful in finding a more satisfying explanation to why the derivative plays a role there, which we were not able to do.

\subsection{Comparison with \texorpdfstring{\cite[Theorem 4.3]{Pavlov2022Apr}}{Theorem 4.3}}
\label{secCompare}
For the sake of comparison, we reproduce the statement Theorem 4.3 from \cite{Pavlov2022Apr} using our notation.
\begin{theorem}[{\cite[Theorem 4.3]{Pavlov2022Apr}}]\label{pavlovresult}
If there exists $\beta<\frac{\alpha(\A,\F)^2}{|\A|}$ for which
\begin{equation}\label{PavlovCondition}
    \sum_{f\in\F} |f|\beta^{-|f|}<\frac{1}{36}\,,
\end{equation}
then for all sufficiently large $n$,
\begin{equation*}
    |\mathcal L_n(\A,\F)|<4\alpha(\A,\F)^n\,.
\end{equation*}
\end{theorem}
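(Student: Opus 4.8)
The plan is to show that $\F$ forces bounded supermultiplicativity and then conclude exactly as in Corollary~\ref{corSupermult}. Writing $a_k=|\mathcal{L}_k(\A,\F)|$ and $\alpha=\alpha(\A,\F)$, I would prove that there exist a constant $c>\tfrac14$ and an index $n_0$ with $a_{n+m}\ge c\,a_na_m$ for all $n,m\ge n_0$; feeding $(c\,a_k)_{k\ge n_0}$ into the eventual version of Fekete's lemma for supermultiplicative sequences (as in the proof of Corollary~\ref{corSupermult}) then yields $a_n\le c^{-1}\alpha^n<4\alpha^n$ for all $n\ge n_0$. To obtain the supermultiplicative inequality I would reuse the decomposition from the proof of Theorem~\ref{thmSupermult}: since $\mathcal{L}_{n+m}(\A,\F)=\mathcal{L}_n(\A,\F)\cdot\mathcal{L}_m(\A,\F)\setminus\bigcup_{f\in\F}\mathfrak B_f$, where $\mathfrak B_f$ collects the pairs whose concatenation creates an occurrence of $f$ overlapping the cut, one has $a_{n+m}\ge a_na_m-\sum_{f\in\F}|\mathfrak B_f|$ and $|\mathfrak B_f|\le\sum_{i=1}^{|f|-1}a_{n-i}\,a_{m-|f|+i}$. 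Everything then reduces to bounding $\sum_{f\in\F}\sum_{i=1}^{|f|-1}a_{n-i}\,a_{m-|f|+i}$ by $(1-c)\,a_na_m$ with $1-c<\tfrac34$.

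The genuinely new point, and the crux, is how to carry out that bound \emph{without} the recurrence hypothesis $a_{k+1}\ge\beta a_k$. In Theorem~\ref{thmSupermult} that recurrence instantly gives $a_{n-i}a_{m-|f|+i}\le\beta^{-|f|}a_na_m$, so the sum telescopes to $\sum_{f}(|f|-1)\beta^{-|f|}a_na_m$. Here the only input is $\beta<\alpha^2/|\A|$, i.e.\ $|\A|\,\beta<\alpha^2$ (and note $\beta<\alpha$, since $\alpha\le|\A|$). The plan is to recover a geometric bound $a_{n-i}\,a_{m-|f|+i}\le K\,\beta^{-|f|}a_na_m$, valid once $n,m$ are large, out of the crude sandwich $\alpha^k\le a_k\le|\A|^k$ together with the asymptotics $\lim_k a_k^{1/k}=\alpha$: the strict inequality $|\A|\beta<\alpha^2$ is precisely the slack needed to absorb, in the gap between $\beta$ and $\alpha^2/|\A|$, the factor $|\A|$ that one pays when crudely relating a truncated count to $a_n$ or $a_m$, while the asymptotics guarantee one only does this in the regime where $a_k$ is already essentially of size $\alpha^k$. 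It is probably cleanest to run this after replacing $\mathcal{L}_n(\A,\F)$ by the language $\widehat{\mathcal L}_n$ of the associated two-sided subshift (the infinitely extendable words): $(\widehat{\mathcal L}_n)_n$ is nondecreasing, has the same growth rate $\alpha$, and differs from $a_n$ only by a ratio negligible on the relevant window, so a bound for $\widehat{\mathcal L}$ transfers. Summing then gives $\sum_{f\in\F}\sum_{i=1}^{|f|-1}a_{n-i}a_{m-|f|+i}\le K\,a_na_m\sum_{f\in\F}(|f|-1)\beta^{-|f|}<K\,a_na_m\sum_{f\in\F}|f|\beta^{-|f|}<\tfrac{K}{36}\,a_na_m$, so once the bookkeeping constant $K$ is controlled one gets $1-c\le K/36<\tfrac34$ — the generous thresholds $\tfrac1{36}$ and $4$ being exactly what makes room for this $K$.

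The step I expect to be the main obstacle is the one just sketched: converting the purely asymptotic hypothesis $\beta<\alpha^2/|\A|$ into a \emph{uniform, geometric} control of the boundary corrections $a_{n-i}a_{m-|f|+i}$ relative to $a_na_m$, uniformly over \emph{all} $f\in\F$ — including arbitrarily long forbidden words, where the effective summability is supplied by the convergence of $\sum_{f\in\F}|f|\beta^{-|f|}$, and where one must ensure the truncated lengths $n-i$ and $m-|f|+i$ never drop into the small-$k$ regime in which $a_k$ can be far from $\alpha^k$. This last point is the true origin of the ``for all sufficiently large $n$'' restriction, in contrast with Theorem~\ref{thmSupermult}, which delivers the inequality for all $n,m$. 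Once that estimate is established, the rest is routine: substitute $c>\tfrac14$ into the supermultiplicative form of Fekete's lemma exactly as in Corollary~\ref{corSupermult} to get $|\mathcal L_n(\A,\F)|<4\alpha(\A,\F)^n$ for all large $n$.
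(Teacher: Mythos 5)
First, note that the paper does not prove this statement at all: it is quoted verbatim from Pavlov, and the paper's own contribution here is only the discussion in Section~\ref{secCompare}, which records the key point of Pavlov's argument — namely that the hypothesis $\beta<\alpha(\A,\F)^2/|\A|$ is used to show that the set of indices $n$ with $|\mathcal L_n(\A,\F)|\ge\beta^t|\mathcal L_{n-t}(\A,\F)|$ for all $t$ has lower density at least $\tfrac12$, and the conclusion is then obtained along such indices together with submultiplicativity. Your plan diverges from this at exactly the step you yourself flag as the crux, and that step is a genuine gap. Writing $a_k=|\mathcal L_k(\A,\F)|$, you want $a_{n-i}\,a_{m-|f|+i}\le K\beta^{-|f|}a_na_m$ for all large $n,m$, all $f\in\F$ and all $i$, using only $\alpha^k\le a_k\le|\A|^k$ and $a_k^{1/k}\to\alpha$. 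This cannot work: the $k$-th-root asymptotics only give $a_k\le(\alpha+\varepsilon)^k$ for large $k$, so the best one gets this way is $a_{n-i}a_{m-|f|+i}\le K(\alpha+\varepsilon)^{n+m-|f|}$, and comparing with $a_na_m\ge\alpha^{n+m}$ leaves an uncontrolled factor $(1+\varepsilon/\alpha)^{n+m}$, which for short $f$ and large $n+m$ destroys the bound. The missing ingredient is precisely a pointwise ratio control $a_{n-t}\le K\beta^{-t}a_n$, i.e.\ growth by at least $\beta$ per step; knowing only that $a_k^{1/k}\to\alpha$ allows $a_k/\alpha^k$ to fluctuate unboundedly (indeed, bounding $a_k/\alpha^k$ by a constant is essentially the conclusion of the theorem, so assuming any such control is circular). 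The hypothesis $\beta<\alpha^2/|\A|$ is not there to ``absorb a factor $|\A|$'' coming from a bounded number of boundary letters — the boundary deficit $|f|$ is unbounded over $\F$ — but to run Pavlov's density argument producing the good indices where the $\beta$-per-step estimate does hold.

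Your fallback of passing to the extendable language $\widehat{\mathcal L}_n$ does not repair this: monotonicity only gives $\widehat a_{n-i}\le\widehat a_n$, which yields $|\mathfrak B_f|\le(|f|-1)\,\widehat a_n\widehat a_m$ with no $\beta^{-|f|}$ decay, and the sum over an infinite $\F$ then diverges; the decay has to come from a per-step growth estimate, which is exactly what is unavailable for all $n$ under Pavlov's hypotheses (in this paper it comes from the stronger hypothesis of Theorem~\ref{boundOnGrowth}, which is why Theorem~\ref{thmSupermult} holds for all $n,m$). The remaining parts of your sketch are fine: if one did have $a_{n+m}\ge c\,a_na_m$ with $c>\tfrac14$ for all $n,m\ge n_0$, then iterating along $n,2n,4n,\dots$ and taking $2^k$-th roots gives $a_n\le c^{-1}\alpha^n<4\alpha^n$ for $n\ge n_0$. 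But to establish the supermultiplicative inequality you would have to follow Pavlov's route (good indices of lower density $\ge\tfrac12$, decompositions with both parts at good indices, and the constants $\tfrac1{36}$ and $4$ calibrated to that density argument), which is a genuinely different mechanism from the one you propose.
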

Pavlov uses the condition $\beta<\frac{\alpha(\A,\F)^2}{|\A|}$ to ensure that the set of integers $n$ so that $|\mathcal L_n(\A,\F)|\ge \beta^t |L_{n-t}(\A,\F)|$ for every $t$ has lower density at least $\frac12$. Meanwhile, our condition $|\A| \ge \beta+\sum_{f\in \F} \beta^{1-|f|}$ is there to ensure that $|\mathcal L_{n+1}(\A,\F)|\ge\beta|\mathcal L_n(\A,\F)|$ for every $n$. That is why Theorem \ref{pavlovresult} only holds for sufficiently large $n$, while our conclusion holds for every $n$. Also note that our condition is more explicit on $\beta$ as we do not reference $\alpha(\A,\F)$. On the other hand, the strict inequality $|\A| > \beta+\sum_{f\in \F} \beta^{1-|f|}$ in our condition is equivalent to
\[
    \frac{|\A|}{\beta}-1 > \sum_{f\in \F} \beta^{-|f|},
\]
which is in general weaker than the second condition of Theorem \ref{pavlovresult}.
We would emphasize that another merit of our work lies in the approach where we give a relatively direct proof of the supermultiplicativity, while Pavlov's proof still uses the submultiplicativity $|\mathcal L_{n+m}(\A,\F)|\le |\mathcal L_n(\A,\F)||\mathcal L_m(\A,\F)|$ and bounds $|\mathcal L_n(\A,\F)|$ by a constant of $\alpha(\A,\F)^n$ for certain values of $n$ by a variant of Miller's method \cite{miller}. Also note that the constants given by our result are better in many cases, but replacing Miller's techniques with those from Theorem \ref{boundOnGrowth} would improve the constants in Theorem \ref{pavlovresult} as well.

\section{Power versions}\label{powersection}
For all $p>1$ a non-empty word $w$ is a \emph{$p$-power} if it can be written $w=u^{\lfloor p\rfloor}v$ with $v$ a prefix of $u$ and $\frac{|w|}{|u|}\ge p$. 
The period of the $p$-power is $u$ (or $|u|$ by abuse of notation). 
A word is \emph{$p$-power-free} if none of its factors is a $p$-power. 
For all real $p>1$, we let $\mathcal{L}(\A,p)$ be the set of $p$-power-free words over alphabet $\A$.

Theorem \ref{boundOnGrowth} and Theorem \ref{thmSupermult} can be applied to the setting of $p$-power-free words. However, in this setting, there is a more efficient way to use the same ideas. In this section, we provide a version of  these two theorems tailored to $p$-power-free words. 
The proof of the following result resembles the proof of Theorem \ref{boundOnGrowth}. A similar result was also provided in \cite{Rosenfeld2021Oct}.
\begin{theorem}\label{boundOnGrowthPow}
Let $\A$ be an alphabet and let $p>1$.
Suppose there exists $\beta>1$ such that 
\begin{equation}\label{mainConditionPow}
    |\A|-\sum_{\ell\ge1} \beta^{1-\lceil \ell(p-1)\rceil} \ge \beta\,,
\end{equation}
then for all $n\ge0$,
$$|\mathcal{L}_{n+1}(\A,p)|\ge \beta|\mathcal{L}_{n}(\A,p)|\,.$$
\end{theorem}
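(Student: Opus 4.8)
The plan is to mimic the proof of Theorem~\ref{boundOnGrowth}, replacing the set $\F$ of explicit forbidden factors by the (infinite) family of $p$-powers, but grouping these powers by their period length $\ell$ rather than by the whole word. I would again argue by strong induction on $n$, assuming $|\mathcal{L}_{i+1}(\A,p)|\ge\beta|\mathcal{L}_{i}(\A,p)|$ for all $i<n$ (so that $|\mathcal{L}_{n-j}(\A,p)|\le\beta^{-j}|\mathcal{L}_{n}(\A,p)|$ for $j<n$), and aim to prove it for $i=n$. Set $\mathfrak{B}=(\mathcal{L}_{n}(\A,p)\cdot\A)\setminus\mathcal{L}_{n+1}(\A,p)$, so that $|\mathcal{L}_{n+1}(\A,p)|=|\A|\cdot|\mathcal{L}_{n}(\A,p)|-|\mathfrak{B}|$ exactly as in \eqref{Binequality}, and the whole game is to bound $|\mathfrak{B}|$ from above by $\bigl(\sum_{\ell\ge1}\beta^{1-\lceil\ell(p-1)\rceil}\bigr)|\mathcal{L}_{n}(\A,p)|$.

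The key step is the classification of words $ua\in\mathfrak{B}$ by the shortest period $\ell$ of the $p$-power that $a$ creates as a suffix. If $ua=wz^{\lfloor p\rfloor}z'$ with $|z|=\ell$, $z'$ a prefix of $z$, and $|z^{\lfloor p\rfloor}z'|\ge p\ell$, then $ua$ is determined by its prefix of length $(n+1)-\lceil\ell(p-1)\rceil$ together with the period $z$ of length $\ell$: indeed once that prefix is fixed, the periodicity forces the remaining $\lceil\ell(p-1)\rceil-1$ letters, so the number of such words is at most $|\mathcal{L}_{(n+1)-\lceil\ell(p-1)\rceil}(\A,p)|$ — here I must double-check the exact arithmetic that a $p$-power of period $\ell$ has its tail of length exactly $\lceil\ell(p-1)\rceil$ beyond a length-$\ell$ window, since $\lfloor p\rfloor\ell+|z'|\ge p\ell$ rearranges to $|z'|\ge(p-\lfloor p\rfloor)\ell=(p-1)\ell-(\lfloor p\rfloor-1)\ell$, and the minimal integer tail length works out to $\lceil\ell(p-1)\rceil$. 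Applying the induction hypothesis with $j=\lceil\ell(p-1)\rceil-1$ gives, for each $\ell$, a contribution of at most $\beta^{1-\lceil\ell(p-1)\rceil}|\mathcal{L}_{n}(\A,p)|$ to $|\mathfrak{B}|$ (and when $n+1<\lceil\ell(p-1)\rceil$ the corresponding set is empty, so the bound still holds vacuously as in the original proof). Summing over $\ell\ge1$ and plugging into \eqref{Binequality},
\begin{align*}
    |\mathcal{L}_{n+1}(\A,p)|
    &\ge|\A|\cdot|\mathcal{L}_{n}(\A,p)|-\sum_{\ell\ge1}\beta^{1-\lceil\ell(p-1)\rceil}|\mathcal{L}_{n}(\A,p)|\\
    &=|\mathcal{L}_{n}(\A,p)|\left(|\A|-\sum_{\ell\ge1}\beta^{1-\lceil\ell(p-1)\rceil}\right)\\
    &\ge\beta|\mathcal{L}_{n}(\A,p)|
\end{align*}
by \eqref{mainConditionPow}, which closes the induction. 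The lower bound $\alpha(\A,p)\ge\beta$ then follows immediately.

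The main obstacle I anticipate is not the induction scaffolding, which is routine once Theorem~\ref{boundOnGrowth} is in hand, but getting the counting of $p$-powers of a fixed period exactly right: the subtle point is that I want to count each bad word $ua$ \emph{once}, via its minimal period, so that the union bound over $\ell$ does not double count, and I want the ``$\mathfrak{B}_\ell$ is determined by a shorter prefix'' estimate to land precisely at the exponent $1-\lceil\ell(p-1)\rceil$. One has to be a little careful that the prefix of length $(n+1)-\lceil\ell(p-1)\rceil$ that determines $ua$ is itself a word of $\mathcal{L}(\A,p)$ of that length (it is, being a factor of $ua$ minus its last letter, hence a factor of $u\in\mathcal{L}_n(\A,p)$ — wait, more carefully, the relevant prefix is a prefix of $ua$ and every proper prefix of $ua$ lies in $\mathcal{L}(\A,p)$ since $ua$ only fails because of the suffix power, so it lies in $\mathcal{L}_{(n+1)-\lceil\ell(p-1)\rceil}(\A,p)$), and also that $\lceil\ell(p-1)\rceil\ge1$ for all $\ell\ge1$ so every term in the sum is meaningful. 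With these checks done, the computation above goes through verbatim in the style of the proof of Theorem~\ref{boundOnGrowth}.
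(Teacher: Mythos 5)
Your proposal is correct and follows essentially the same route as the paper's proof: strong induction with $\mathfrak{B}=(\mathcal{L}_n(\A,p)\cdot\A)\setminus\mathcal{L}_{n+1}(\A,p)$ split according to the period $\ell$ of the suffix $p$-power, the observation that each such word is determined via periodicity by its $p$-power-free prefix of length $n+1-\lceil(p-1)\ell\rceil$, and the induction hypothesis giving the factor $\beta^{1-\lceil(p-1)\ell\rceil}$ before summing over $\ell$. Only cosmetic remarks: the forced tail consists of $\lceil(p-1)\ell\rceil$ letters (not $\lceil(p-1)\ell\rceil-1$), and insisting on the shortest period is unnecessary, since the union bound over $\ell$ tolerates overlaps, exactly as in the paper.
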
 The main difference with Theorem \ref{boundOnGrowth} is that we group all the $p$-powers of period $\ell$ together in $\mathfrak{B}_\ell$. Then in order to bound the size of $\mathfrak{B}_\ell$, we do not forget the entire $p$-power (which would give $|\mathfrak{B}_\ell|\le |\A|^\ell |\mathcal{L}_{n+1-\lceil \ell p\rceil}(\A,p)|$), but only a fraction of the $p$-power to obtain the bound \eqref{boundonpower} which is more optimal.
\begin{proof}
    We proceed by strong induction on $n$. 
    Assuming $|\mathcal{L}_{i+1}(\A,p)|\ge \beta|\mathcal{L}_{i}(\A,p)|$ for all $0\le i<n$, we will prove that it is true for $i=n$ as well. (Note that the base case is trivial.)
    
    The induction hypothesis implies that for all $j<n$, 
    \begin{equation}\label{FromIHpPow}
        |\mathcal{L}_{n-j}(\A,p)|\le \frac{|\mathcal{L}_{n}(\A,p)|}{\beta^j}\,.
    \end{equation}

    We let $\mathfrak{B}$ be the set of words not in $\mathcal{L}_{n+1}(\A,p)$ obtained by concatenating a letter to a word from $\mathcal{L}_{n}(\A,p)$, that is,
    \begin{align*}
    \mathfrak{B}
    &=\{ua: u\in \mathcal{L}_{n}(\A,p), a\in \A, ua \not\in\mathcal{L}_{n+1}(\A,p)\}\\
    &=(\mathcal{L}_{n}(\A,p)\cdot \A) \setminus \mathcal{L}_{n+1}(\A,p)\,.
    \end{align*}
    It follows from $\L_{n+1}(\A,p)\subseteq \L_n(\A,p)\cdot \A$ that
    \begin{equation}
        |\mathcal{L}_{n+1}(\A,p)| = |\A|\cdot|\mathcal{L}_{n}(\A,p)|-|\mathfrak{B}|\,.
    \end{equation}
    For all $\ell\ge1$, we let $\mathfrak{B}_\ell$ be the set of words from $\mathfrak{B}$ that ends with a $p$-power of period $\ell$. By definition, $\mathfrak{B}=\bigcup_{\ell\ge1} \mathfrak{B}_\ell$ which implies 
    \begin{equation}\label{partitioninBfPow}
        |\mathfrak{B}|\le \sum_{\ell\ge1} |\mathfrak{B}_\ell|\,.
    \end{equation}
    
    For all $\ell\ge1$ and $v\in\mathfrak{B}_\ell$, by definition, $v$ ends with a $p$-power $w$ of period $\ell$, so $|w|\ge p\ell$. On the other hand, if we remove the last letter of $v$ we obtain a $p$-power-free word, which means that $|w|-1<p\ell$. This implies $|w|=\lceil p\ell\rceil$.
    Hence, there exists $u\in \mathcal{L}_{n+1-\lceil(p-1)\ell\rceil}(\A,p)$ and $x\in\mathcal{A}^{\lceil(p-1)\ell\rceil}$ such that $v=ux$ and the period of the $p$-power is the suffix of length $\ell$ of $u$. Now, given $n$, $\ell$ and $u$ there is a unique $x$ such that $ux$ ends with a $p$-power of period $\ell$ and length $\lceil p\ell\rceil$. Hence, for all $\ell\ge1$, $|\mathfrak{B}_\ell|\le |\mathcal{L}_{n+1-\lceil(p-1)\ell\rceil}(\A,p)|$.
     If follows from \eqref{FromIHpPow}, that for all $\ell$, 
    \begin{equation}\label{boundonpower}
        |\mathfrak B_\ell|
        \le |\L_{n+1-\lceil(p-1)\ell\rceil}(\A,p)|
        \le \frac{|\L_{n}(\A,p)|}{\beta^{\lceil(p-1)\ell\rceil-1}}\,.
    \end{equation}
    Now from \eqref{partitioninBfPow},
    \[
        |\mathfrak{B}|\le \sum_{\ell\ge1} \frac{|\L_{n}(\A,p)|}{\beta^{\lceil(p-1)\ell\rceil-1}}.
    \]
    In total,
    \begin{align*}
        |\L_{n+1}(\A,p)|&\ge |\A||\L_n(\A,p)| - \sum_{\ell\ge1} \frac{|\L_{n}(\A,p)|}{\beta^{\lceil(p-1)\ell\rceil-1}} \\
        &= |\L_n(\A,p)|\left(|\A| - \sum_{\ell\ge1} \beta^{1-\lceil(p-1)\ell\rceil}\right) \\
        &\ge \beta|\L_n(\A,p)|.
    \end{align*}
    We finish the induction step and the conclusion follows.
\end{proof}

We are now ready to give the $p$-power-free word analog of Theorem \ref{thmSupermult}.
\begin{theorem}\label{thmSupermultPow}
    Let $\A$ be an alphabet and let $p>1$.
    Suppose there exists $\beta>1$ such that $|\mathcal L_{n+1}(\A,p)|\ge\beta|\mathcal L_n(\A,p)|$ for all $n\ge 0$ and let
    \begin{equation*}
        C\coloneq1-\sum_{\ell\ge1} \frac{\lceil p\ell\rceil-1}{\beta^{\lceil (p-1)\ell\rceil}}>0\,,
    \end{equation*}
    then for all $n,m\ge0$,
    $$|\mathcal{L}_{n+m}(\A,p)|\ge C\cdot|\mathcal{L}_{n}(\A,p)|\cdot|\mathcal{L}_{m}(\A,p)|\,.$$  
\end{theorem}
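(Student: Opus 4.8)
The plan is to mirror the proof of Theorem~\ref{thmSupermult}, replacing the set~$\F$ of forbidden factors by the (infinite) family of $p$-powers grouped by their period. Concretely, fix $n,m\ge0$. Since every word of $\mathcal{L}_{n+m}(\A,p)$ is the concatenation of its length-$n$ prefix and its length-$m$ suffix, both of which are $p$-power-free, we have $\mathcal{L}_{n+m}(\A,p)\subseteq \mathcal{L}_{n}(\A,p)\cdot\mathcal{L}_{m}(\A,p)$, and the words in the right-hand product that fail to lie in $\mathcal{L}_{n+m}(\A,p)$ are exactly those containing an occurrence of a $p$-power that straddles the concatenation point. For each period $\ell\ge1$, let $\mathfrak{B}_\ell$ be the set of words $w\in\mathcal{L}_{n}(\A,p)\cdot\mathcal{L}_{m}(\A,p)$ that contain an occurrence of a $p$-power of period $\ell$ crossing the cut; then
\begin{equation*}
    |\mathcal{L}_{n+m}(\A,p)| \ge |\mathcal{L}_{n}(\A,p)|\cdot|\mathcal{L}_{m}(\A,p)| - \sum_{\ell\ge1} |\mathfrak{B}_\ell|\,,
\end{equation*}
exactly as in \eqref{sizeOfLmn}.

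Next I would bound $|\mathfrak{B}_\ell|$. Here I want to use the key observation already exploited in the proof of Theorem~\ref{boundOnGrowthPow}: a $p$-power occurring in such a word $w$, but not in the length-$n$ prefix nor the length-$m$ suffix alone, when "trimmed" to be a minimal such occurrence has length exactly $\lceil p\ell\rceil$ (removing a boundary letter destroys it), and it is determined by the period together with one surrounding side. Writing $w = u\,q\,v$ where $q$ is the straddling $p$-power of period $\ell$ and length $\lceil p\ell\rceil$, the position of the cut inside $q$ can be any of $\lceil p\ell\rceil - 1$ interior positions; once that position~$i$ and the prefix $u$ of length $n-i$ are fixed, the period of~$q$ (length~$\ell$) lies within $u$ or is the short remaining piece, so $q$ and hence the whole word $w$ is determined by $u$ together with the length-$(m + i - \lceil p\ell\rceil)$ suffix $v$. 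This gives
\begin{equation*}
    \mathfrak{B}_\ell \subseteq \bigcup_{i=1}^{\lceil p\ell\rceil-1}\left\{ uqv : u\in\mathcal{L}_{n-i}(\A,p),\ v\in\mathcal{L}_{m+i-\lceil p\ell\rceil}(\A,p)\right\}\,,
\end{equation*}
and therefore, using $|\mathcal{L}_{n+1}(\A,p)|\ge\beta|\mathcal{L}_n(\A,p)|$ (hence $|\mathcal{L}_{n-j}(\A,p)|\le \beta^{-j}|\mathcal{L}_n(\A,p)|$ for $j\ge0$, and vacuously when the index is negative),
\begin{equation*}
    |\mathfrak{B}_\ell| \le \sum_{i=1}^{\lceil p\ell\rceil-1} \frac{|\mathcal{L}_n(\A,p)|}{\beta^{i}}\cdot\frac{|\mathcal{L}_m(\A,p)|}{\beta^{\lceil p\ell\rceil - i}} = \frac{\lceil p\ell\rceil-1}{\beta^{\lceil p\ell\rceil}}\,|\mathcal{L}_n(\A,p)|\cdot|\mathcal{L}_m(\A,p)|\,.
\end{equation*}

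The subtle point — and the step I expect to be the main obstacle — is reconciling the exponent: the sum above naturally produces $\beta^{-\lceil p\ell\rceil}$, whereas the statement of the theorem has $\beta^{-\lceil (p-1)\ell\rceil}$ in the denominator of $C$. The resolution is that $u$ contains the full period (length $\ell$) of the $p$-power, so the part of the $p$-power that is genuinely "new" — not already forced by $u$ together with the constraint — has length only $\lceil p\ell\rceil - \ell = \lceil (p-1)\ell\rceil$ roughly; more carefully, one should set up the decomposition so that the straddling occurrence is reconstructed from $u\in\mathcal{L}_{n-i}(\A,p)$ and $v\in\mathcal{L}_{m+i-\lceil p\ell\rceil}(\A,p)$ with the cut ranging only over the $\lceil p\ell\rceil - 1$ interior positions but the telescoping of the two geometric factors $\beta^{-i}\beta^{-(\lceil p\ell\rceil - i)}$ being replaced by the tighter count that remembers the period is inside one of the two pieces — this is precisely the optimization carried out in Theorem~\ref{boundOnGrowthPow} via the bound \eqref{boundonpower}, and I would import the same bookkeeping here. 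Granting the bound $|\mathfrak{B}_\ell| \le \frac{\lceil p\ell\rceil - 1}{\beta^{\lceil (p-1)\ell\rceil}}|\mathcal{L}_n(\A,p)|\cdot|\mathcal{L}_m(\A,p)|$, substitution into the first displayed inequality yields
\begin{equation*}
    |\mathcal{L}_{n+m}(\A,p)| \ge |\mathcal{L}_n(\A,p)|\cdot|\mathcal{L}_m(\A,p)|\left(1 - \sum_{\ell\ge1}\frac{\lceil p\ell\rceil - 1}{\beta^{\lceil (p-1)\ell\rceil}}\right) = C\cdot|\mathcal{L}_n(\A,p)|\cdot|\mathcal{L}_m(\A,p)|\,,
\end{equation*}
which is the claim. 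One should also note, as in Theorem~\ref{thmSupermult}, that the upper bounds on $|\mathfrak{B}_\ell|$ remain valid when $n$ or $m$ is too small for the intermediate languages to be defined, since the corresponding sets are then empty.
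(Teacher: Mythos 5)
There is a genuine gap, and it sits exactly at the step you flag and then ``grant''. Your claimed injection is not valid: in your decomposition $w=uqv$ the word $u$ ends where the straddling $p$-power $q$ begins, so the period of $q$ does \emph{not} lie within $u$, and the pair $(u,v)$ (together with $i$) does not determine $q$ at all --- there are roughly $|\A|^{\ell}$ possible periods. Hence even your intermediate bound $|\mathfrak{B}_\ell|\le\sum_{i=1}^{\lceil p\ell\rceil-1}|\mathcal{L}_{n-i}(\A,p)|\cdot|\mathcal{L}_{m+i-\lceil p\ell\rceil}(\A,p)|$ is unjustified. The way to ``import the bookkeeping'' of Theorem~\ref{boundOnGrowthPow} is to absorb one full period into one of the two $p$-power-free pieces, e.g.\ replace $u$ by $u'=uxy$ of length $n-i+\ell$ (whose $p$-power-freeness itself requires choosing the leftmost straddling occurrence); then $(i,u',v)$ does determine $w$. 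But this creates the real obstacle: when the cut falls inside the first period, i.e.\ $i<\ell$, the left piece has length $n-i+\ell>n$ and cannot be compared to $|\mathcal{L}_n(\A,p)|$ using only the hypothesis $|\mathcal{L}_{k+1}|\ge\beta|\mathcal{L}_k|$ (which bounds in the wrong direction). For $p\ge2$ one can instead absorb the last period into the right piece (its length $m+i-\lceil(p-1)\ell\rceil$ is then at most $m$), so a direct adaptation of Theorem~\ref{thmSupermult} works; but for $1<p<2$ there are positions with $\lceil(p-1)\ell\rceil<i<\ell$ where neither side can absorb the period without exceeding length $n$ or $m$, and no direct telescoping gives the exponent $\lceil(p-1)\ell\rceil$.

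The paper's proof supplies the missing idea by arguing by induction on $n+m$: with $u'\in\mathcal{L}_{n-i+\ell}(\A,p)$ and $v\in\mathcal{L}_{m+i-\lceil p\ell\rceil}(\A,p)$, whose lengths sum to $n+m-\lceil(p-1)\ell\rceil<n+m$, the induction hypothesis (supermultiplicativity at smaller total length, at the cost of a factor $C^{-1}$) combined with the growth hypothesis gives
\begin{equation*}
|\mathcal{L}_{n-i+\ell}(\A,p)|\cdot|\mathcal{L}_{m+i-\lceil p\ell\rceil}(\A,p)|\le C^{-1}|\mathcal{L}_{n+m-\lceil(p-1)\ell\rceil}(\A,p)|\le C^{-1}\beta^{-\lceil(p-1)\ell\rceil}|\mathcal{L}_{n+m}(\A,p)|\,,
\end{equation*}
so that $\sum_{\ell}|\mathfrak{B}_\ell|\le\frac{1-C}{C}|\mathcal{L}_{n+m}(\A,p)|$, and solving $|\mathcal{L}_{n+m}|\ge|\mathcal{L}_n||\mathcal{L}_m|-\frac{1-C}{C}|\mathcal{L}_{n+m}|$ yields the constant $C$. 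Your write-up contains neither this induction nor any substitute for it, and since (as the paper remarks) the non-inductive route genuinely fails for $p<2$, the ``granting the bound'' sentence is precisely where the proof is missing rather than a routine verification.
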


\begin{proof}
    We proceed by induction on the sum $n+m$. The base case is $n+m=0$, that is $n=m=0$. It means that we need $C\le 1$. We prove that if the conclusion holds for every pair $n',m'$ with $n'+m'<n+m$, then it also holds for $n,m$.
    
    For every integer $\ell\ge1$, we let $\mathfrak{B}_\ell$ be the set of words in $\L_{n}(\A,p)\cdot\L_{m}(\A,p)$ that contain a $p$-power of period $\ell$ and no shorter $p$-power.
    We have
    \begin{equation*}
        \mathcal{L}_{n+m}(\A,p) = \mathcal{L}_{n}(\A,p)\cdot\mathcal{L}_{m}(\A,p) -\bigcup_{\ell\ge 1} \mathfrak{B}_\ell\,.
    \end{equation*}
    This implies
    \begin{equation}\label{sizeOfLmnpow}
        |\mathcal{L}_{n+m}(\A,p)| \ge |\mathcal{L}_{n}(\A,p)|\cdot|\mathcal{L}_{m}(\A,p)| -\sum_{\ell\ge 1} |\mathfrak{B}_\ell|\,.
    \end{equation}
    
    For any integer $\ell\ge1$ and $w\in \mathfrak{B}_\ell$, the $p$-power in $w$ is due to the concatenation, that is, there exists $i\in\{1,\ldots, \lceil p\ell\rceil -1\}$ such that $w=u(xy)^{\lfloor p\rfloor}xv$ with $|u|=n-i$, $|v|=m+i-\lceil p\ell\rceil$, $|xy|=\ell$ and $|(xy)^{\lfloor p\rfloor}x|=\lceil p\ell\rceil$. This decomposition might not be unique, but by taking the leftmost decomposition (i.e., the shortest $u$), we can ensure that $u'=uxy$ and $v$ are $p$-power free. 
    Given the values of $\ell$, $p$, $i\in\{1,\dots,\lceil p\ell\rceil -1\}$, the words $u'\in \mathcal{L}_{n-i+\ell}(\A,p)$ and $v\in\mathcal{L}_{m+i-\lceil p\ell\rceil}(\A,p)$, there is a unique corresponding     
    $w=u'(xy)^{\lfloor p-1\rfloor}xv\in \mathfrak{B}_\ell$. This implies
    \begin{equation*}
        |\mathfrak{B}_\ell|\le \sum_{i=1}^{\lceil p\ell\rceil-1}|\mathcal{L}_{n-i+\ell}(\A,p)|\cdot|\mathcal{L}_{m+i-\lceil p\ell\rceil}(\A,p)\,.
    \end{equation*}
    By the induction hypothesis and the condition $\mathcal L_{n+1}(\A,p)\ge\beta \mathcal L_n(\A,p)$ for all $n$, we have 
    \[
        |\mathcal{L}_{n-i+\ell}(\A,p)|\cdot|\mathcal{L}_{m+i-\lceil p\ell\rceil}(\A,p)|\le C^{-1}\cdot|\mathcal{L}_{n+m-\lceil p\ell\rceil+\ell}(\A,p)|\le C^{-1}\cdot\beta^{-(\lceil p\ell\rceil - \ell)}\cdot |\mathcal{L}_{n+m}(\A,p)|,
    \]
    for all $i$. This upper bound is independent of $i$ which implies
    \begin{equation*}
        |\mathfrak{B}_\ell|
        \le  \frac{\lceil p\ell\rceil-1}{C\beta^{\lceil (p-1)\ell\rceil}}\cdot |\mathcal{L}_{n+m}(\A,p)|\,.
    \end{equation*}
    Using this in \eqref{sizeOfLmnpow}, we get
    \[
    |\mathcal{L}_{n+m}(\A,p)| \ge |\mathcal{L}_{n}(\A,p)|\cdot|\mathcal{L}_{m}(\A,p)| -\frac{1}{C}\sum_{\ell\ge1} \frac{\lceil p\ell\rceil-1}{\beta^{\lceil (p-1)\ell\rceil}}\cdot |\mathcal{L}_{n+m}(\A,p)|\,.
    \]
    Elementary manipulations yield
    \begin{align*}
        |\mathcal{L}_{n+m}(\A,p)| 
        &\ge \left(1+\frac{1}{C}\sum_{\ell\ge1} \frac{\lceil p\ell\rceil-1}{\beta^{\lceil (p-1)\ell\rceil}}\right)^{-1} \cdot  |\mathcal{L}_{n}(\A,p)|\cdot|\mathcal{L}_{m}(\A,p)|\\
        &=C \cdot  |\mathcal{L}_{n}(\A,p)|\cdot|\mathcal{L}_{m}(\A,p)|\,,
    \end{align*}
    where the last equality is simply the definition $C=1-\sum_{\ell\ge1} \frac{\lceil p\ell\rceil-1}{\beta^{\lceil (p-1)\ell\rceil}}$.
    This concludes our induction.   
\end{proof}

\begin{remark}
    The proof of Theorem \ref{thmSupermultPow} is slightly more complicated than the corresponding general result in Theorem \ref{thmSupermult}. Indeed, our proof of Theorem \ref{thmSupermultPow} requires an inductive argument for the cases $p<2$ (for $p\ge2$, we could have directly adapted the proof from Theorem \ref{thmSupermult}). On the other hand, using a similar inductive argument in the proof of Theorem \ref{thmSupermult} does not improve the bounds.
\end{remark}

For the sake of illustration, we provide the following two corollaries of these two theorems for the case of square-free words, that is, $p=2$. 
\begin{corollary}\label{explicitGrowthSquare}
Let $\A$ be an alphabet with $|\A|\ge4$. Let
 \[
    \beta=\frac{|\A|+\sqrt{|\A|^2-4|\A|}}{2}
\]
then for all $n\ge0$,
$$|\mathcal{L}_{n+1}(\A,2)|\ge \beta|\mathcal{L}_{n}(\A,2)|\,.$$
\end{corollary}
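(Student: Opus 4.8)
The plan is to obtain Corollary~\ref{explicitGrowthSquare} as a direct application of Theorem~\ref{boundOnGrowthPow} with $p=2$. First I would specialize the condition~\eqref{mainConditionPow} to $p=2$: since $\lceil \ell(p-1)\rceil = \lceil \ell\rceil = \ell$ for integer $\ell$, the sum $\sum_{\ell\ge1}\beta^{1-\lceil \ell(p-1)\rceil}$ collapses to the geometric series $\sum_{\ell\ge1}\beta^{1-\ell} = \frac{\beta}{\beta-1}$ (valid since $\beta>1$). So the hypothesis of Theorem~\ref{boundOnGrowthPow} becomes
\begin{equation*}
    |\A| - \frac{\beta}{\beta-1} \ge \beta\,,
\end{equation*}
and it suffices to exhibit a $\beta>1$ satisfying this. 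Once such a $\beta$ is produced, Theorem~\ref{boundOnGrowthPow} immediately gives $|\mathcal{L}_{n+1}(\A,2)|\ge\beta|\mathcal{L}_n(\A,2)|$ for all $n\ge0$, which is exactly the claimed conclusion.

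Next I would solve the inequality $|\A| - \frac{\beta}{\beta-1}\ge\beta$ for $\beta$. Multiplying through by $\beta-1>0$ turns it into $(|\A|-\beta)(\beta-1)\ge\beta$, i.e. $|\A|\beta - |\A| - \beta^2 + \beta \ge \beta$, which simplifies to $\beta^2 - |\A|\beta + |\A| \le 0$. The roots of $\beta^2 - |\A|\beta + |\A|$ are $\frac{|\A|\pm\sqrt{|\A|^2-4|\A|}}{2}$, and these are real precisely when $|\A|^2 \ge 4|\A|$, i.e. $|\A|\ge4$ — which is exactly the hypothesis of the corollary. So for $|\A|\ge4$ the quadratic is $\le0$ on the closed interval between its two roots, and in particular it vanishes at the larger root
\[
    \beta = \frac{|\A|+\sqrt{|\A|^2-4|\A|}}{2}\,,
\]
which is the value stated in the corollary.

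The only remaining point to check is that this $\beta$ genuinely satisfies $\beta>1$, so that Theorem~\ref{boundOnGrowthPow} applies: for $|\A|\ge4$ we have $\beta \ge \frac{|\A|}{2}\ge 2>1$ (discarding the nonnegative square-root term only decreases the value), so this is immediate. Plugging this $\beta$ into Theorem~\ref{boundOnGrowthPow} finishes the proof. There is essentially no obstacle here — the whole argument is a routine specialization of an already-proved theorem together with solving one quadratic inequality; the mild subtlety is just recognizing that the condition $|\A|\ge4$ is exactly what makes the discriminant nonnegative, so that the stated closed-form $\beta$ is well-defined and satisfies~\eqref{mainConditionPow} with equality.
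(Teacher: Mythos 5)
Your proposal is correct and follows the same route as the paper: specialize Theorem \ref{boundOnGrowthPow} to $p=2$ so that the condition becomes $|\A|-\frac{\beta}{\beta-1}\ge\beta$, observe that the stated $\beta$ is (the largest) solution of the resulting quadratic inequality, and apply the theorem. Your extra steps (solving $\beta^2-|\A|\beta+|\A|\le 0$ explicitly and checking $\beta>1$) are just a more detailed write-up of what the paper compresses into the identification $\beta=\max\{x: x+\frac{x}{x-1}\le|\A|\}>1$.
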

\begin{proof}
We have
\[
    \beta=\frac{|\A|+\sqrt{|\A|^2-4|\A|}}{2}=\max\left\{x: x+\frac{x}{x-1}\le|\A|\right\}>1\,,
\]
which implies 
\[
|\A|-\sum_{\ell\ge1}\beta^{1-\ell} = |\A|-\frac{\beta}{\beta-1}\ge\beta\,.
\]
We can then apply Theorem \ref{boundOnGrowthPow} to conclude.
\end{proof}

\begin{corollary}\label{squareFreeSupermult}
Let $\A$ be an alphabet with $|\A|\ge5$ and let 
 $\beta=\frac{|\A|+\sqrt{|\A|^2-4|\A|}}{2}$. 
Then for all $n,m\ge0$,
$$|\mathcal{L}_{n+m}(\A,2)|\ge C\cdot|\mathcal{L}_{n}(\A,2)|\cdot|\mathcal{L}_{m}(\A,2)|\,,$$
where 
\begin{equation*}
    C=1-\frac{1+\beta}{(\beta-1)^2}>0\,.
\end{equation*}  
\end{corollary}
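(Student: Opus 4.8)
The plan is to invoke Theorem~\ref{thmSupermultPow} with $p=2$. Since $|\A|\ge 5\ge 4$, Corollary~\ref{explicitGrowthSquare} applies with $\beta=\frac{|\A|+\sqrt{|\A|^2-4|\A|}}{2}$ and provides exactly the hypothesis $|\mathcal{L}_{n+1}(\A,2)|\ge\beta|\mathcal{L}_{n}(\A,2)|$ for all $n\ge0$ that Theorem~\ref{thmSupermultPow} requires. So it only remains to evaluate the constant $C$ from Theorem~\ref{thmSupermultPow} in this setting and to check that it is positive.

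For $p=2$ we have $\lceil p\ell\rceil=2\ell$ and $\lceil(p-1)\ell\rceil=\ell$, so the constant of Theorem~\ref{thmSupermultPow} reads
\[
    C=1-\sum_{\ell\ge1}\frac{2\ell-1}{\beta^{\ell}}\,.
\]
Setting $x=1/\beta$ and using $\sum_{\ell\ge1}x^{\ell}=\frac{x}{1-x}$ and $\sum_{\ell\ge1}\ell x^{\ell}=\frac{x}{(1-x)^2}$ (both valid since $\beta>1$), one gets $\sum_{\ell\ge1}\frac{2\ell-1}{\beta^{\ell}}=\frac{2\beta}{(\beta-1)^2}-\frac{1}{\beta-1}=\frac{\beta+1}{(\beta-1)^2}$, hence $C=1-\frac{1+\beta}{(\beta-1)^2}$, which is the claimed expression.

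Finally, $C>0$ is equivalent to $(\beta-1)^2>\beta+1$, that is, to $\beta^2-3\beta>0$, that is (since $\beta>1$), to $\beta>3$. To see $\beta>3$ when $|\A|\ge 5$, recall from the proof of Corollary~\ref{explicitGrowthSquare} that $\beta=\max\{x:x+\frac{x}{x-1}\le|\A|\}$; the map $x\mapsto x+\frac{x}{x-1}$ is increasing on $(2,\infty)$ and takes the value $\tfrac{9}{2}$ at $x=3$, so from $|\A|\ge5>\tfrac{9}{2}$ we obtain $\beta>3$. (Equivalently, $\beta$ is the larger root of $x^2-|\A|x+|\A|$, and this quadratic equals $9-2|\A|<0$ at $x=3$ whenever $|\A|\ge5$, so $3$ lies strictly below the larger root.) Applying Theorem~\ref{thmSupermultPow} with this $\beta$ and this $C$ then gives the statement. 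There is no genuine obstacle here: the only nonroutine parts are the summation of the series and the elementary inequality $\beta>3$.
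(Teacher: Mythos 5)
Your proof is correct and follows essentially the same route as the paper: invoke Corollary~\ref{explicitGrowthSquare} for the hypothesis $|\mathcal{L}_{n+1}(\A,2)|\ge\beta|\mathcal{L}_{n}(\A,2)|$, sum the series $\sum_{\ell\ge1}(2\ell-1)\beta^{-\ell}=\frac{1+\beta}{(\beta-1)^2}$, and apply Theorem~\ref{thmSupermultPow}. The only difference is cosmetic: you verify $C>0$ algebraically via the equivalence with $\beta>3$ (using that $\beta$ is the larger root of $x^2-|\A|x+|\A|$), whereas the paper computes $C$ numerically at $|\A|=5$ and uses monotonicity in $|\A|$; both are fine.
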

\begin{proof}
We first verify that $C>0$.
For $|\A|=5$, explicit computation of $C$ yields approximately $0.3262$. Since $C$ is an increasing function of $\beta$, which is an increasing function of $|\A|$,  we have $C>0$ for all $|\A|\ge5$. 

By Corollary \ref{explicitGrowthSquare}, we have $|\mathcal{L}_{n+1}(\A,2)|\ge \beta|\mathcal{L}_{n}(\A,2)|$, for all $n\ge0$. Moreover, 
\[
    1-\sum_{\ell\ge1} \frac{ 2\ell-1}{\beta^{\ell}}=1-\frac{1+\beta}{(\beta-1)^2}=C>0\,,
\]
which allows us to apply Theorem \ref{thmSupermultPow} to conclude.
\end{proof}

Note that our constants in Corollary \ref{explicitGrowthSquare} and Corollary \ref{squareFreeSupermult} are by no mean optimal. One can use computer-assisted proofs with automata to obtain better values for $\beta$ in Corollary \ref{explicitGrowthSquare} (see \cite{Shur2012Nov}). However, the constant $C$ in Corollary \ref{squareFreeSupermult} is so far the only one of its kind in the literature. It is not obvious how to use automata to improve the constant $C$ from Corollary \ref{squareFreeSupermult}.

\section{Square-free circular words and \texorpdfstring{$\F$}{F}-free circular words}\label{Shurconjsection}
Let $\accentset{\circ}{\mathcal{L}}(\A,2)$ be the set of words $w$ such that $\accentset{\circ}{w}$ is a square-free circular word over the alphabet $\A$, and for all $n$, $\accentset{\circ}{\mathcal{L}}_n(\A,2)$ consists of those of length $n$.

The main idea behind our proof is that there are so many square-free words of a given length that only a fraction of them can fail to be square-free circular words.

\begin{theorem}\label{circularsquarefreethm}
Let $\A$ be an alphabet. Suppose there exists $\beta>1$ such that $|\mathcal L_{n+1}(\A,2)|\ge\beta|\mathcal L_n(\A,2)|$ for all $n\ge 0$ and let
    \begin{equation*}
        C\coloneq1-\frac{1+\beta}{(\beta-1)^2}>0\,,
    \end{equation*}
then for all $n\ge0$, 
$$|\mathcal{L}_n(\A, 2)|\ge|\accentset{\circ}{\mathcal{L}}_n(\A,2)|\ge C\cdot|\mathcal{L}_n(\A, 2)|\,.$$
\end{theorem}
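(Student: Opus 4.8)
The plan is to mimic the proof of Theorem~\ref{thmSupermultPow} with $p=2$, but where the two concatenated blocks are now the \emph{same} word $w$ read on a circle. More precisely, the upper bound $|\accentset{\circ}{\mathcal{L}}_n(\A,2)|\le|\mathcal{L}_n(\A,2)|$ is immediate, since every circular-square-free word is in particular square-free (take any conjugate, or just $w$ itself). For the lower bound, I would identify $\accentset{\circ}{\mathcal{L}}_n(\A,2)$ with the set of $w\in\mathcal{L}_n(\A,2)$ such that the ``doubled word'' $ww$ contains no square of period $\le n/2$ that wraps around the junction between the first and second copy; equivalently, $w$ fails to be circular-square-free exactly when some conjugate of $w$ contains a square, which corresponds to a square of period $\ell$, $1\le \ell\le \lfloor n/2\rfloor$, occurring in $ww$ and genuinely straddling position $n$.

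The key steps, in order: first, set $\mathfrak{B}_\ell$ to be the set of words $w\in\mathcal{L}_n(\A,2)$ for which $ww$ contains a square of period $\ell$ straddling the junction, and no shorter such square; then $\mathcal{L}_n(\A,2)\setminus\accentset{\circ}{\mathcal{L}}_n(\A,2)\subseteq\bigcup_{\ell\ge1}\mathfrak{B}_\ell$, so that
\begin{equation*}
|\accentset{\circ}{\mathcal{L}}_n(\A,2)|\ge|\mathcal{L}_n(\A,2)|-\sum_{\ell\ge1}|\mathfrak{B}_\ell|\,.
\end{equation*}
Second, bound $|\mathfrak{B}_\ell|$: if a square $(xy)^2$ of period $\ell=|xy|$ straddles the junction at offset $i\in\{1,\dots,2\ell-1\}$ (meaning the square occupies positions $n-i+1,\dots,n-i+2\ell$ cyclically), then removing the square from the cyclic word leaves a square-free word of length $n-2\ell$; combined with the portion of period recorded, this shows $w$ is determined by a square-free word of length $n-2\ell+\ell=n-\ell$ together with the offset $i$, so $|\mathfrak{B}_\ell|\le (2\ell-1)\,|\mathcal{L}_{n-\ell}(\A,2)|$. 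Third, use the hypothesis $|\mathcal{L}_{k+1}(\A,2)|\ge\beta|\mathcal{L}_k(\A,2)|$ to get $|\mathcal{L}_{n-\ell}(\A,2)|\le\beta^{-\ell}|\mathcal{L}_n(\A,2)|$, hence
\begin{equation*}
\sum_{\ell\ge1}|\mathfrak{B}_\ell|\le\left(\sum_{\ell\ge1}\frac{2\ell-1}{\beta^\ell}\right)|\mathcal{L}_n(\A,2)|=\frac{1+\beta}{(\beta-1)^2}\,|\mathcal{L}_n(\A,2)|=(1-C)\,|\mathcal{L}_n(\A,2)|\,,
\end{equation*}
which gives $|\accentset{\circ}{\mathcal{L}}_n(\A,2)|\ge C\cdot|\mathcal{L}_n(\A,2)|$ as desired.

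The main obstacle I anticipate is step two: making the injection precise in the cyclic setting. One must be careful that a square wrapping around the circle is recorded exactly once (choose the square of minimal period $\ell$, and among those the leftmost straddling occurrence, to break ties), that the ``leftover'' after deleting $2\ell$ cyclically consecutive letters really is a \emph{linear} square-free word of length $n-2\ell$ (this is where square-freeness of every conjugate of $w$ — not just of $w$ — is used, so that no new square appears after the deletion; in fact since $w\in\mathcal{L}_n(\A,2)$ is square-free and we only delete letters, the leftover is automatically square-free, so the subtlety is really just in reconstructing $w$), and that the period data needed to reconstruct the deleted block amounts to recording $\ell$ additional letters plus the offset $i$. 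The counting $|\mathfrak{B}_\ell|\le(2\ell-1)|\mathcal{L}_{n-\ell}(\A,2)|$ should follow once the reconstruction map $(\text{square-free word of length }n-\ell,\ i)\mapsto w$ is shown to be surjective onto $\mathfrak{B}_\ell$; the edge cases $2\ell>n$ force $\mathfrak{B}_\ell=\emptyset$, so the infinite sum is really finite and the bounds hold vacuously there. Everything else is the same elementary series manipulation already used for Corollary~\ref{squareFreeSupermult}.
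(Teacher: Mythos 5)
Your proposal is correct and follows essentially the same route as the paper's proof: a union bound over the period $\ell$ and the $2\ell-1$ possible offsets of the junction-straddling square, an injection of each class into $\mathcal{L}_{n-\ell}(\A,2)$ obtained by discarding one period copy, the hypothesis $|\mathcal{L}_{k+1}(\A,2)|\ge\beta|\mathcal{L}_{k}(\A,2)|$, and the evaluation $\sum_{\ell\ge1}(2\ell-1)\beta^{-\ell}=\frac{1+\beta}{(\beta-1)^2}$. The only point to repair is your parenthetical justification: deleting letters does \emph{not} in general preserve square-freeness, but here the needed square-freeness of the retained $n-\ell$ letters holds because the period copy you delete is the one straddling the junction, so what remains is a contiguous factor of the linear square-free word $w$ from which $w$ is recoverable given the offset --- exactly the bookkeeping the paper performs via the identity $s''=ps'$ in its decomposition $w=pvs's''$.
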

\begin{proof}
Let $n\ge0$ and $\mathfrak{B} =\mathcal{L}_n(\A, 2)\setminus\accentset{\circ}{\mathcal{L}}_n(\A,2)$, 
which implies
\begin{equation}\label{mainEqCircular}
    |\accentset{\circ}{\mathcal{L}}_n(\A,2)|=|\mathcal{L}_n(\A, 2)|- |\mathfrak{B}|\,.
\end{equation}

For any $w\in \mathfrak{B}$, $w$ is square-free and there exists words $p,v,s$ such that $w=pvs$, $p$ and $s$ are non-empty, and $sp$ is a square. 
For all $i,j$, we let $\mathfrak{B}_{i,j}$ be the set of words from $\mathfrak{B}$ that can be written $w=pvs$ where $sp$ is a square of period $i$ and $|s|=j$. This implies
$$\mathfrak{B}= \bigcup_{\substack{i\ge1\\j\in\{1,\ldots, 2i-1\}}}\mathfrak{B}_{i,j}\,,$$
which itself implies
\begin{equation}\label{boundonBijCircular}
    |\mathfrak{B}|\le \sum_{i\ge1}\sum_{j=1}^{2i-1}|\mathfrak{B}_{i,j}|\,.
\end{equation}
Fix $i,j$ and without loss of generality, we assume $j\ge i$, the other case being symmetric. 
For $w\in \mathfrak{B}_{i,j}$, we write $w=pvs$ where $|s|=j$ and $sp$ is a square of period $i$.
Let $s=s's''$ such that $s''$ is the suffix of $s$ of length $i$ (which is well-defined since $|s|=j\ge i$). Since $s's''p$ is a square of period $i$, we have $s''=ps'$. That is, for any word $w\in \mathfrak{B}_{i,j}$, the word $w$ is uniquely determined by its prefix of length $|w|-i$. Since this prefix is necessarily square-free, this implies
\begin{equation*}
    |\mathfrak{B}_{i,j}|
    \le |\mathcal{L}_{n-i}(\A, 2)|\le \frac{|\mathcal{L}_{n}(\A, 2)|}{ \beta^i}\,.
\end{equation*} Using this with equations \eqref{mainEqCircular} and \eqref{boundonBijCircular}, we obtain
\begin{align*}
    |\accentset{\circ}{\mathcal{L}}_n(\A, 2)|
    &=|\mathcal{L}_n(\A, 2)|- |\mathfrak{B}|\\
    &\ge|\mathcal{L}_n(\A, 2)|-  \sum_{i\ge1}\sum_{j=1}^{2i-1}|\mathfrak{B}_{i,j}|\\
    &\ge|\mathcal{L}_n(\A, 2)|\left[1 -  \sum_{i\ge1}\sum_{j=1}^{2i-1} \beta^{-i}\right]\\
    &\ge|\mathcal{L}_n(\A, 2)|\left[1 -  \sum_{i\ge1}(2i-1) \beta^{-i}\right]\\
    &\ge|\mathcal{L}_n(\A, 2)|\left[1 - \frac{1+\beta}{(\beta-1)^2}\right]\\
    &=C|\mathcal{L}_n(\A, 2)|\,.
\end{align*}
which concludes our proof.
\end{proof}

We obtain more explicit bounds by combining this result with Corollary \ref{explicitGrowthSquare}.

\begin{corollary}\label{sqfreevssircular}
Let $\A$ be an alphabet with $|\A|\ge5$.
Let 
    \begin{equation*}
        \beta \coloneq \frac{|\A|+\sqrt{|\A|^2-4|\A|}}{2}, \text{ and } C\coloneq1-\frac{1+\beta}{(\beta-1)^2}\,,
    \end{equation*}
    then $C>0$ and 
$$|\accentset{\circ}{\mathcal{L}}_{n}(\A, 2)|\ge C|\mathcal{L}_{n}(\A, 2)|\,.$$

\end{corollary}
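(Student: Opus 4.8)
The plan is to observe that Corollary \ref{sqfreevssircular} is an immediate consequence of the two preceding results, so the "proof" is really just the verification that the hypotheses of Theorem \ref{circularsquarefreethm} are met for the given $\beta$, together with the positivity check $C>0$.

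First I would recall, exactly as in the proof of Corollary \ref{explicitGrowthSquare}, that the stated value
\[
\beta=\frac{|\A|+\sqrt{|\A|^2-4|\A|}}{2}
\]
is the largest root of $x+\tfrac{x}{x-1}=|\A|$, hence satisfies $\beta>1$ and $|\A|-\sum_{\ell\ge1}\beta^{1-\ell}=|\A|-\tfrac{\beta}{\beta-1}\ge\beta$; by Theorem \ref{boundOnGrowthPow} (with $p=2$) this gives $|\mathcal L_{n+1}(\A,2)|\ge\beta|\mathcal L_n(\A,2)|$ for all $n\ge0$. So the growth hypothesis of Theorem \ref{circularsquarefreethm} holds. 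Next I would check the positivity of $C=1-\tfrac{1+\beta}{(\beta-1)^2}$: since $\beta$ is increasing in $|\A|$ and $C$ is increasing in $\beta$ (for $\beta>1$, as $\tfrac{1+\beta}{(\beta-1)^2}$ is decreasing), it suffices to evaluate at $|\A|=5$, where $\beta=\tfrac{5+\sqrt5}{2}\approx3.618$ and $C\approx0.326>0$. This is identical to the computation already done in Corollary \ref{squareFreeSupermult}, so I would simply cite that verification rather than redo it.

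With both hypotheses confirmed, Theorem \ref{circularsquarefreethm} applies verbatim and yields $|\accentset{\circ}{\mathcal L}_n(\A,2)|\ge C|\mathcal L_n(\A,2)|$ for all $n\ge0$, which is exactly the claimed conclusion. There is essentially no obstacle here — the only thing to be careful about is making sure the monotonicity claims ($\beta$ increasing in $|\A|$, $C$ increasing in $\beta$) are stated so that the single numerical check at $|\A|=5$ genuinely covers all larger alphabets; everything else is a direct appeal to Corollary \ref{explicitGrowthSquare} and Theorem \ref{circularsquarefreethm}. In short, the proof is a two-line assembly: invoke Corollary \ref{explicitGrowthSquare} for the $\beta$-growth, reuse the $C>0$ computation from Corollary \ref{squareFreeSupermult}, then apply Theorem \ref{circularsquarefreethm}.
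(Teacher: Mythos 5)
Your proposal is correct and follows exactly the paper's own route: the paper's proof likewise cites Corollary \ref{explicitGrowthSquare} for the growth condition, relies on the positivity computation (value $\approx 0.3262$ at $|\A|=5$ plus monotonicity of $C$ in $\beta$ and of $\beta$ in $|\A|$) already carried out in Corollary \ref{squareFreeSupermult}, and then applies Theorem \ref{circularsquarefreethm}. No differences worth noting.
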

\begin{proof}
By Corollary \ref{explicitGrowthSquare}, we have
\[
    |\mathcal L_{n+1}(\A,2)|\ge\beta|\mathcal L_n(\A,2)|\,.
\]

So by definition of $C$, we can apply Theorem \ref{circularsquarefreethm}, and we get 
$|\accentset{\circ}{\mathcal{L}}_n(\A,2)|\ge C\cdot|\mathcal{L}_n(\A, 2)|$, 
with $C=1-\frac{1+\beta}{(\beta-1)^2}$, as desired.
\end{proof}
We give in Table \ref{tableC} of the corresponding values of $C$ for small values of $|\A|$. It is not hard to verify that the $C$ given in  Corollary \ref{sqfreevssircular} behaves like $1-\frac{1}{|\A|}-o\left(\frac{1}{|\A|}\right)$, as $|\A|$ goes to infinity.
\begin{table}
    \center
    \begin{tabular}{c|ccccccccccc}
        $|\A|$ & 5 & 6 & 7 & 8 & 9 & 10 & 11 & 12 & 13 & 14 & 15 \\\hline
         $C$ & 0.32 & 0.58 & 0.70 & 0.76 & 0.81 & 0.84 & 0.86 & 0.87 & 0.89 & 0.90 & 0.91 
    \end{tabular}
    \caption{The values of $C$ given by Corollary \ref{sqfreevssircular}}
    \label{tableC}
\end{table}

We have proven that over any alphabet of size at least $5$ the number of square-free circular words has the same growth rate as the number of square-free words. The question remains open for alphabets of size $3$ and $4$.
It was first proven by Currie \cite{Currie2002Oct} that there are square-free circular ternary words of all lengths other than $5,7,9,10,14,17$. The existence of these six lengths implies that we cannot hope for the existence of a positive constant $C$ such that for all $n$, $ |\accentset{\circ}{\mathcal{L}}_n(\{0,1,2\}, 2)| \ge C |\mathcal{L}_n(\{0,1,2\}, 2)|$. If it is possible to use the same technique for the case of ternary words, it probably requires handling all these cases, which probably entails a computer assisted proof. It might be easier to deal with the alphabet of size $4$, but we do not know how to do it. 

\begin{remark}
The supermultiplicativity (resp. submultiplicativity) of $\left(\mathcal{L}_n(\A, 2)\right)_{n\ge0}$ and the fact that $\accentset{\circ}{\mathcal{L}}_n(\A,2)$ and $\mathcal{L}_n(\A, 2)$ are within a constant multiple of each other imply that the sequence  $\left(|\accentset{\circ}{\mathcal{L}}_n(\A,2)|\right)_{n\ge0}$ is also supermultiplicative (resp. submultiplicative). We wonder whether one could find a more direct proof for any of these properties for $\left(|\accentset{\circ}{\mathcal{L}}_n(\A,2)|\right)_{n\ge0}$ (in this case even submultiplicativity is not as obvious as for the sequence $\left(|\mathcal{L}_n(\A, 2)|\right)_{n\ge0}$). We also wonder the same thing for $\F$-free circular words.
\end{remark}

\paragraph{$\F$-free circular words}
We now state a version of Theorem \ref{circularsquarefreethm} in the general setting of $\F$-free circular words.
For any set $\F\subseteq\A^+$, let $\accentset{\circ}{\mathcal{L}}(\A,\F)$ be the set of words $w$ such that $\accentset{\circ}{w}$ is a $\F$-free circular word over the alphabet $\A$, and for all $n$, $\accentset{\circ}{\mathcal{L}}_n(\A,\F)$ contains those of length $n$. 
\begin{theorem}\label{circularFfreethm}
    Let $\A$ be an alphabet and let $\F\subseteq\A^+$ be a set of factors.
    Suppose there exists $\beta > 1$ such that $|\mathcal{L}_{n+1}(\A, \F)|\ge \beta |\mathcal{L}_n(\A, \F)|$ for all $n\ge 0$, and such that 
    \begin{equation*}
        C\coloneq 1-\sum_{f\in \F} (|f|-1)\beta^{-|f|}>0\,,
    \end{equation*}
    then 
    \begin{equation*}
        |\mathcal{L}_n(\A, \F)|\ge|\accentset{\circ}{\mathcal{L}}_n(\A,\F)|\ge C\cdot|\mathcal{L}_n(\A, \F)|\,.
    \end{equation*}
\end{theorem}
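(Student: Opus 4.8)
The plan is to mimic the proof of Theorem \ref{circularsquarefreethm} almost verbatim, replacing squares by arbitrary forbidden factors $f\in\F$ and adjusting the bookkeeping accordingly. Fix $n\ge0$ and set $\mathfrak{B}=\mathcal{L}_n(\A,\F)\setminus\accentset{\circ}{\mathcal{L}}_n(\A,\F)$, so that $|\accentset{\circ}{\mathcal{L}}_n(\A,\F)|=|\mathcal{L}_n(\A,\F)|-|\mathfrak{B}|$; the upper bound $|\accentset{\circ}{\mathcal{L}}_n(\A,\F)|\le|\mathcal{L}_n(\A,\F)|$ is immediate since a word whose cyclic class avoids $\F$ in particular avoids $\F$. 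A word $w\in\mathfrak{B}$ is $\F$-free but some conjugate of it contains an occurrence of some $f\in\F$; equivalently, writing $w$ on a circle, there is an occurrence of some $f$ that ``wraps around'' the seam. Concretely, for each $f\in\F$ and each $j\in\{1,\ldots,|f|-1\}$, let $\mathfrak{B}_{f,j}$ be the set of $w\in\mathfrak{B}$ admitting a decomposition $w=pvs$ with $|s|=j$, $|p|=|f|-j$, $p,s$ non-empty, and $sp=f$. Then $\mathfrak{B}=\bigcup_{f\in\F}\bigcup_{j=1}^{|f|-1}\mathfrak{B}_{f,j}$, hence $|\mathfrak{B}|\le\sum_{f\in\F}\sum_{j=1}^{|f|-1}|\mathfrak{B}_{f,j}|$.

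The key step is to bound $|\mathfrak{B}_{f,j}|$. The point is that if $w\in\mathfrak{B}_{f,j}$, then the suffix $s$ of length $j$ and the prefix $p$ of length $|f|-j$ are both completely determined: $s$ is the suffix of $f$ of length $j$ and $p$ is the prefix of $f$ of length $|f|-j$ (recall $sp=f$). Thus $w=pvs$ is determined by the middle word $v$, equivalently by the prefix $pv$ of $w$ of length $n-j$, which is necessarily in $\mathcal{L}_{n-j}(\A,\F)$ since $w$ is $\F$-free. This gives
\begin{equation*}
    |\mathfrak{B}_{f,j}|\le|\mathcal{L}_{n-j}(\A,\F)|\le\frac{|\mathcal{L}_n(\A,\F)|}{\beta^{j}}\,,
\end{equation*}
where the last inequality uses the hypothesis $|\mathcal{L}_{n+1}(\A,\F)|\ge\beta|\mathcal{L}_n(\A,\F)|$ iterated $j$ times (and the bound holds trivially, the set being empty, when $n<j$). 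Here I should double-check whether it is cleaner to record the prefix of length $n-j$ or, symmetrically, the suffix of length $n-(|f|-j)$; using the shorter of the two removed parts would give the slightly better bound $\beta^{-\min(j,|f|-j)}$, but since summing over all $j\in\{1,\dots,|f|-1\}$ is what matches the constant $C$ in the statement, I will just sum the $\beta^{-j}$ bound — exactly as in the proof of Theorem \ref{circularsquarefreethm}, where summing $(2i-1)\beta^{-i}$ over $i$ reproduces $C$.

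Summing everything up,
\begin{align*}
    |\accentset{\circ}{\mathcal{L}}_n(\A,\F)|
    &=|\mathcal{L}_n(\A,\F)|-|\mathfrak{B}|\\
    &\ge|\mathcal{L}_n(\A,\F)|-\sum_{f\in\F}\sum_{j=1}^{|f|-1}\frac{|\mathcal{L}_n(\A,\F)|}{\beta^{j}}\\
    &\ge|\mathcal{L}_n(\A,\F)|\left(1-\sum_{f\in\F}(|f|-1)\beta^{-|f|}\right)\\
    &=C\cdot|\mathcal{L}_n(\A,\F)|\,,
\end{align*}
using $\sum_{j=1}^{|f|-1}\beta^{-j}\le(|f|-1)\beta^{-1}$? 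No — that is the wrong direction, so this is the one place that needs care: I must instead note $\sum_{j=1}^{|f|-1}\beta^{-j}\le(|f|-1)\beta^{-|f|}\cdot\beta^{|f|-1}$, which is false in general, so the honest route is to recognize that the bound $|\mathfrak{B}_{f,j}|\le\beta^{-j}|\mathcal{L}_n|$ should actually be replaced, as in Theorem~\ref{circularsquarefreethm}, by pairing $j$ with $|f|-j$: there one bounds $|\mathfrak{B}_{i,j}|\le\beta^{-i}|\mathcal{L}_n|$ where $i$ is the \emph{period} and the sum over the $2i-1$ values of $j$ yields the factor $(2i-1)\beta^{-i}$, i.e.\ $(|f|-1)\beta^{-|f|}$ with $|f|=2i$. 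The correct general statement is thus: bound $|\mathfrak{B}_{f,j}|\le\beta^{-\lfloor|f|/1\rfloor}\dots$; concretely one records the prefix of length $n-(|f|-1)\ge$ hmm. The cleanest fix, and the main obstacle to get exactly the constant $C=1-\sum_f(|f|-1)\beta^{-|f|}$, is to observe that a wrapped occurrence of $f$ determines a contiguous block of $|f|$ letters of $w$ (cyclically), so $w$ is determined by the remaining $n-|f|$ letters \emph{together with the} $|f|-1$ \emph{choices of how $f$ straddles the seam}; hence $|\mathfrak{B}|\le(|f|-1)\,\beta^{-|f|}\,|\mathcal{L}_n(\A,\F)|$ summed over $f$, giving the claim. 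I expect the bulk of the writing effort to be in phrasing this ``$f$ occupies a cyclic block'' argument precisely so the arithmetic lands on $C$ exactly, and in double-checking the degenerate small-$n$ cases where the relevant $\mathcal{L}_{n-|f|}$ is empty.
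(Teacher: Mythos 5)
Your final paragraph is the right argument, and it is exactly the assemblage of the proofs of Theorem \ref{thmSupermult} and Theorem \ref{circularsquarefreethm} that the paper intends; but as written, the middle of your proposal is broken and must be replaced, not patched. The displayed estimate $|\mathfrak{B}_{f,j}|\le|\mathcal{L}_{n-j}(\A,\F)|\le\beta^{-j}|\mathcal{L}_n(\A,\F)|$ only forgets the suffix $s$, and summing $\beta^{-j}$ over $j=1,\dots,|f|-1$ gives roughly $1/(\beta-1)$ per forbidden factor, which is far larger than $(|f|-1)\beta^{-|f|}$ — as you yourself noticed mid-computation. The clean step, which you only sketch at the very end, is: since $sp=f$ and $|s|=j$, both $s$ (the prefix of $f$ of length $j$ — you swapped prefix and suffix, harmlessly) and $p$ (the suffix of $f$ of length $|f|-j$) are completely determined by the pair $(f,j)$, so the map $w=pvs\mapsto v$ is injective on $\mathfrak{B}_{f,j}$; moreover $v$, being a factor of the $\F$-free word $w$, lies in $\mathcal{L}_{n-|f|}(\A,\F)$. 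Hence $|\mathfrak{B}_{f,j}|\le|\mathcal{L}_{n-|f|}(\A,\F)|\le\beta^{-|f|}|\mathcal{L}_n(\A,\F)|$ (trivially so when $n<|f|$, the set being empty), and summing over the $|f|-1$ admissible values of $j$ and over $f\in\F$ yields $|\mathfrak{B}|\le(1-C)\,|\mathcal{L}_n(\A,\F)|$, which together with $|\accentset{\circ}{\mathcal{L}}_n(\A,\F)|=|\mathcal{L}_n(\A,\F)|-|\mathfrak{B}|$ gives the claim. Note that this is precisely where the general case differs from Theorem \ref{circularsquarefreethm}: there one drops only $i$ letters of a square of length $2i$ because its second half is determined by its first half, whereas here one drops all $|f|$ letters; your digression about dropping only $\min(j,|f|-j)$ letters should simply be deleted.
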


The explicit version of submultiplicativity and supermultiplicativity of $(|\accentset{\circ}{\mathcal{L}}_n(\A,\F)|)_n$ can be written as
\[
    C^2|\accentset{\circ}{\mathcal{L}}_n(\A,\F)||\accentset{\circ}{\mathcal{L}}_m(\A,\F)|\le |\accentset{\circ}{\mathcal{L}}_{n+m}(\A,\F)|\le C^{-2} |\accentset{\circ}{\mathcal{L}}_n(\A,\F)||\accentset{\circ}{\mathcal{L}}_m(\A,\F)|.
\]

The proof of Theorem \ref{circularFfreethm} is an assemblage  of the ideas of the proof of Theorem \ref{thmSupermult} and of the proof of Theorem \ref{circularsquarefreethm}, and it is left to the reader. The remarks of Subsection \ref{Analyticsubsec} also apply to this result, which can be translated into the following corollary.

 \begin{corollary}
    Let $\A$ be an alphabet and let $\F\subseteq\A^+$ be a set of factors.
    Suppose there exists $\beta\ge1$ such that 
    \begin{equation*}
        |\A|>\sum_{f\in \F} \beta^{1-|f|}+\beta\,,
    \end{equation*}
    then the growth rate $\accentset{\circ}{\alpha}(\A,\F)$ of the language $\accentset{\circ}{\mathcal{L}}_n(\A,\F)$ satisfies
    \begin{equation*}
        \accentset{\circ}{\alpha}(\A,\F) = \alpha(\mathcal{L}_n(\A,\F))\ge \beta\,.
    \end{equation*}
\end{corollary}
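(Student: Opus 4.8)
The plan is to run the same reduction used to pass from Theorem~\ref{thmSupermult} to Corollary~\ref{cor-analytic}, but with Theorem~\ref{circularFfreethm} playing the role of Theorem~\ref{thmSupermult}. First observe that the hypothesis $|\A| > \sum_{f\in\F}\beta^{1-|f|}+\beta$ is exactly $\omega(\beta) < |\A|$, which in particular satisfies condition \eqref{mainCondition} of Theorem~\ref{boundOnGrowth}; hence $|\mathcal{L}_{n+1}(\A,\F)| \ge \beta|\mathcal{L}_n(\A,\F)|$ for all $n\ge 0$, and therefore $\alpha(\A,\F) \ge \beta$. Since $\accentset{\circ}{\mathcal{L}}_n(\A,\F) \subseteq \mathcal{L}_n(\A,\F)$ for every $n$, we trivially get $\accentset{\circ}{\alpha}(\A,\F) \le \alpha(\A,\F)$, so the only remaining point is the matching lower bound $\accentset{\circ}{\alpha}(\A,\F) \ge \alpha(\A,\F)$.

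To obtain it, I would exhibit a (possibly larger) value $\beta' \ge \beta$ for which \emph{both} hypotheses of Theorem~\ref{circularFfreethm} hold: the multiplicative growth bound $|\mathcal{L}_{n+1}(\A,\F)| \ge \beta'|\mathcal{L}_n(\A,\F)|$ (which, via Theorem~\ref{boundOnGrowth}, follows from $\omega(\beta') \le |\A|$) together with $\omega'(\beta') > 0$. When $\F$ is trivial this is immediate, since then $\omega' \equiv 1$ and one may take $\beta' = \beta$. When $\F$ is non-trivial, I would invoke the analytic discussion of Subsection~\ref{Analyticsubsec}: $\omega''>0$ on $(1,\infty)$, so $\omega'$ is strictly increasing and the admissible set $S = \{x>1 : \omega(x)\le |\A|\}$ is an interval; the \emph{strict} inequality $\omega(\beta)<|\A|$, together with continuity of $\omega$, rules out the degenerate case where $S$ is a single point (the only case with $\omega'=0$ there), so $S$ is a genuine interval with a maximum $y=\max S$ at which $\omega'(y)>0$, and one checks $y>\beta$. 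Taking $\beta' = y$ then works. Applying Theorem~\ref{circularFfreethm} with this $\beta'$ yields a constant $C' = 1-\sum_{f\in\F}(|f|-1)(\beta')^{-|f|} = \omega'(\beta') > 0$ with $|\accentset{\circ}{\mathcal{L}}_n(\A,\F)| \ge C'|\mathcal{L}_n(\A,\F)|$ for every $n$. Taking $n$-th roots and using $(C')^{1/n}\to 1$ gives $\accentset{\circ}{\alpha}(\A,\F) \ge \alpha(\A,\F)$, hence $\accentset{\circ}{\alpha}(\A,\F) = \alpha(\A,\F) \ge \beta$, as claimed. Along the way the squeeze $(C'|\mathcal{L}_n|)^{1/n} \le |\accentset{\circ}{\mathcal{L}}_n(\A,\F)|^{1/n} \le |\mathcal{L}_n|^{1/n}$ shows that $|\accentset{\circ}{\mathcal{L}}_n(\A,\F)|^{1/n}$ does converge, so $\accentset{\circ}{\alpha}(\A,\F)$ is well defined.

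The only genuine subtlety is the choice of $\beta'$: one needs a value of the parameter that simultaneously satisfies the linear-growth condition of Theorem~\ref{boundOnGrowth} and lies in the region where $\omega'$ is positive, and it is precisely the strictness of the hypothesis $\omega(\beta)<|\A|$ that guarantees such a $\beta'$ exists (and the trichotomy of Subsection~\ref{Analyticsubsec} that pins it down). Everything else — the inclusion $\accentset{\circ}{\mathcal{L}}_n\subseteq\mathcal{L}_n$, the two applications of Theorem~\ref{boundOnGrowth}, the single application of Theorem~\ref{circularFfreethm}, and the limit computation — is routine.
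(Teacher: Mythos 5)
Your proposal is correct and is essentially the paper's intended argument: the paper leaves this corollary as a direct translation of the remarks of Subsection~\ref{Analyticsubsec} applied to Theorem~\ref{circularFfreethm}, which is exactly what you do — use the convexity/trichotomy of $\omega$ and the strict inequality $\omega(\beta)<|\A|$ to produce a $\beta'>1$ with $\omega(\beta')\le|\A|$ (hence the growth hypothesis via Theorem~\ref{boundOnGrowth}) and $C'=\omega'(\beta')>0$, then squeeze $|\accentset{\circ}{\mathcal{L}}_n(\A,\F)|$ between $C'|\mathcal{L}_n(\A,\F)|$ and $|\mathcal{L}_n(\A,\F)|$. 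The only caveat, shared with the paper's own phrasing, is the boundary value $\beta=1$ (Theorem~\ref{boundOnGrowth} formally requires $\beta>1$), but your passage to $\beta'>1$ already covers that case, so nothing is lost.
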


\paragraph{Circular-free, supermultiplicativity and the Restivo--Salemi property}

Note that $C$ from Theorem \ref{circularFfreethm} (resp. Corollary  \ref{sqfreevssircular}) is the same as in Theorem \ref{thmSupermult} (resp. Corollary \ref{squareFreeSupermult}). However, these constants are not optimal in general, but the optimal constants might be equal as well for many well-behaved languages. Roughly speaking, these two constants give a lower bound on the probability that two valid words can be concatenated without creating a forbidden factor. In Theorem \ref{thmSupermult} the two words are taken independently, while in Theorem \ref{circularFfreethm} these two words are the prefix and suffix of the same word. Our intuition is that for a random $p$-free word the prefix and suffix of length $n$ are almost independent if the word is long enough.
This motivates the following definitions and problem.
For any infinite language $\L(\A,\F)$, we let
\[
\widecheck{C}_t=\frac{|\mathcal{L}_{2t}(\A,\F)|}{|\mathcal{L}_{t}(\A,\F)|\cdot|\mathcal{L}_{t}(\A,\F)|}
\]
and   
\[
\accentset{\circ}{C}_t=\frac{\accentset{\circ}{|\mathcal{L}}_{t}(\A,\F)|}{|\mathcal{L}_{t}(\A,\F)|}\,.
\]
\begin{problem}\label{supermultvscircular}
For which set $\F$ is it true that $\lim\limits_{t\rightarrow\infty} \widecheck{C}_t=\lim\limits_{t\rightarrow\infty} \accentset{\circ}{C}_t$? In particular, when are these limits well-defined?
\end{problem}
We believe that this should be true for many reasonable sets $\F$. In particular, we conjecture that this should be true for $p$-power free words. 
These two quantities are also related to $C_t$, where for all $t$,
\[
C_t=\frac{|\L_n(\A,\F)|}{\alpha(\A,\F)^n}\,.
\]
In particular, we have $\lim\limits_{t\rightarrow\infty} \widecheck{C}_t= \frac{1}{\lim\limits_{t\to\infty}C_t}$, whenever the two limits are well-defined.

It appears that this question might be related to whether or not the language has the Restivo--Salemi property. For any language $\L$, we let $e(\L)$ be the set of words that are infinitely extendable in both directions, that is, $e(\L)=\{w\in\L:\forall n, \exists u,v\in \A^n, uwv\in\L\}$.
Following \cite{Shur2009}, we say that a language has the Restivo--Salemi property,\footnote{The name is motivated by the problems presented by Salemi and Restivo \cite{Restivo1985}.} if for all $u,v\in e(\L)$ there exists $w\in\A^*$ such that $uwv\in e(\L)$. 
In particular, Shur conjectured that any language defined by avoidance of $p$-powers has the Restivo--Salemi property \cite{Shur2009}. 
Recent progress on this conjecture include the cases $|\A|\ge2$ and $p=3$ \cite{Petrova2018Dec} and  $|\A|\ge3$ and $p\ge5$ \cite{Rukavicka2023Dec}. 
The kind of random-like structural properties of a language that imply the  Restivo--Salemi property might be related to the ones that imply the equality in Problem \ref{supermultvscircular}. The technique in this article could also be useful to solve some of the missing cases of Shur's conjecture on the Restivo--Salemi property of $p$-power free languages, but we leave this open as we were not able to make any progress ourselves on this problem.

\section*{Acknowledgments}
The authors sincerely thank the anonymous reviewers for their thoughtful feedback and constructive comments.

\bibliographystyle{unsrt} 
\bibliography{biblio}

\end{document}